\documentclass[11pt]{article}

\usepackage[a4paper,margin=1in]{geometry}
\usepackage{amsmath,amssymb,amsthm,mathtools}
\usepackage{microtype}
\usepackage{hyperref}
\usepackage{xcolor}
\usepackage{enumitem}
\usepackage{verbatim}
\usepackage{authblk}

\newtheorem{theorem}{Theorem}
\newtheorem{remark}[theorem]{Remark}
\newtheorem{lemma}[theorem]{Lemma}

\hypersetup{
  colorlinks=true,
  linkcolor=blue!50!black,
  citecolor=blue!50!black,
  urlcolor=blue!50!black
}

\title{Grokability in five inequalities}

\author{
Paata Ivanisvili\thanks{University of California, Irvine, pivanisv@uci.edu} \quad  Xinyuan Xie\thanks{University of California, Irvine, xinyuax7@uci.edu} }

\date{}

\begin{document}
\maketitle

\begin{abstract}
In this note, we report five mathematical discoveries made in collaboration with Grok, all of which have been subsequently verified by the authors. These include an improved lower bound on the maximal Gaussian perimeter of convex sets in $\mathbb{R}^n$, sharper $L_2$-$L_1$ moment comparison inequalities on the Hamming cube $\{-1,1\}^n$, a strengthened autoconvolution inequality, improved asymptotic bounds on the size of the largest $g$-Sidon sets in $\{1,\dots,n\}$, and an optimal balanced Szarek's inequality.
\end{abstract}

\section{Introduction}
In this note, we record several results in analysis, probability, convex geometry, and additive combinatorics, obtained through conversations with Grok. These results, to the best of our knowledge, have not previously appeared in the public literature.

The first result is an improved lower bound for the maximal Gaussian perimeter of convex sets—the first constant improvement since Nazarov's construction in 2003 \cite{Nazarov2003}. The second result is an improved upper and lower bound on the optimal exponential base for $L_2-L_1$ moment comparison inequalities on the Hamming cube. In particular, the improved lower bound answers a MathOverflow question posed by Noam Lifshitz in 2014 \cite{MO184286}. The third result is an optimal Khintchine-type inequality when the Rademacher random variables are conditioned on the middle slice of the Hamming cube, and we call it balanced Szarek's inequality. In this case, the random variables are no longer independent, but interestingly, the classical semigroup approach can still obtain the optimal constant. The fourth result is an improved lower bound on an autocorrelation inequality related to the size estimate of $g$-Sidon sets. This constant is recorded as the $C_{1a}$ constant in a curated collection of optimization constants in mathematics initiated by Tao \cite{Tao26} and maintained by Davis, Ivanisvili, and Tao \cite{OptConstC1a}. This improvement is of interest from the prospective of  building automatic reasoning systems with the assistance of LLMs, as will be explained in the next paragraph. These examples, together with recent works obtained in a similar fashion \cite{IvanisviliXie25,BubeckCoesterEldanGowersLeeLupsascaSawhneyScherrerSellkeSpearsUnutmazWeilYinZhivotovskiy25,FengTrinhBinghamKangZhangKimBarretoSchildkrautJungSeoPaganoChervonyiHwangHouGukovTsaiChoiJinLiWuShiuShihLeLuong26, WoodruffCohenAddadJainMaoZuoBateniBranzeiBrennerChenFengFortnowFuGuanHadizadehHajiaghayiJafariRavizJavanmardKarthikKawarabayashiKumarLattanziLeeLiPanageasPaparasPrzybockiSubercaseauxSvenssonTaherijamWuYogevZadimoghaddamZhouMatiasManyikaMirrokni26,JuGaoJiangWuSunChenYWangYWangZWangHePWuXiaoLiuDaiDong26}, suggest that the current frontier reasoning models may carry out sophisticated analytical arguments, explore the boundary of existing methods, and implement new ideas with a meaningful degree of accuracy. We hope this note helps the community stay informed about the growing power of these new tools in mathematical exploration and discovery.

 On the other hand, there has been a series of works on building automated reasoning systems with the assistance of LLMs \cite{Georgiev25,Wang25,Yuksekgonul26}. For each problem, these systems first parameterize it, encode the parametrization in computer code, for example in Python, and then iteratively refine (evolve) the code with the help of LLMs. The idea of building automated reasoning systems is very interesting, but one may hope for a more ambitious use of LLMs. For example, the $C_{1a}$ constant has been extensively studied using computer-assisted approaches, whether based on LLMs or on classical implementations, for searches in explicit and structured discrete spaces; see \cite{OptConstC1a} for detailed information about this constant. In particular, the best known lower bound was obtained by exhausting certain large but finite sets using about 20{,}000 CPU hours \cite{CloningerSteinerberger17}. 
In this note, we present a tiny improvement to the lower bound for the $C_{1a}$ constant, obtained within a few minutes of conversation with Grok, suggesting that explicit and structural searching leaves extra room that can be uncovered through LLM-based conversational approaches\footnote{The term ``conversational approach'' was taken from a post by Tao \cite{Tao25Mathstodon}. }. This example, together with the other examples presented in this note, suggests the potential for building reasoning systems that explore mathematical questions through natural language and mathematical arguments.


The rest of the paper is organized as follows. In the remainder of the introduction, we present concise statements of each problem and explain why they are interesting. The proofs are presented in Section~\ref{proofs}. In Appendix~\ref{appendix:A}, we provide links that fully disclose our corresponding conversations with Grok, which led to the new results for each question.



\subsection{Maximal Gaussian perimeter of convex sets}

For a convex set \(K\subseteq\mathbb{R}^n\), its Gaussian perimeter (or Gaussian surface area) is defined as
\[
\operatorname{GSA}(K)=\int_{\partial K}\varphi_n(x)\,d\sigma(x),
\qquad
\varphi_n(x):=(2\pi)^{-n/2}e^{-\|x\|^2/2}.
\]
The extremal quantity of interest is
\[
\Gamma(n):=\sup\{\operatorname{GSA}(K):K\subseteq\mathbb{R}^n \text{ is convex}\}.
\] The problem is to determine the growth of \(\Gamma(n)\) as \(n\to\infty\).

Ball\cite{Ball1993} proved that
\[
\frac{1}{e}\sqrt{\log n } \leq \Gamma(n)\le 4n^{1/4},
\]
Nazarov\cite{Nazarov2003} later showed that the order $n^{1/4}$ is sharp by proving
\[
e^{-5/4} < \liminf_{n\to\infty}\frac{\Gamma(n)}{n^{1/4}} \leq \limsup_{n\to\infty}\frac{\Gamma(n)}{n^{1/4}} < 0.64.
\]
The upper bound for $\Gamma_n$ was used to obtain sharper dimension dependence in multivariate Berry--Esseen bound over convex sets \cite{Bentkus03}. Rai\v{c} \cite{Raic2019} further refined the multivariate Berry--Essen bound and improved the upper bound of $\Gamma(n)$ to
\[
\Gamma(n)\le \sqrt{\frac{2}{\pi}}+0.59\bigl(n^{1/4}-1\bigr),
\] 
The problem is also relevant in learning theory. Klivans, O'Donnell, and Servedio showed that
Gaussian perimeter controls low-degree Hermite approximation, and hence the complexity of PAC
and agnostic learning under Gaussian distributions \cite{KOS2008}. More recently, Nadimpalli and Pascale\cite{NadimpalliPascale2025} revisited Nazarov's construction and recovered the Nazarov's lower-bound constant \(e^{-5/4}\) by an
alternative argument based on convex influence. Here we show that the constant in the lower bound can be improved further: 

\begin{theorem} \label{Gaussian} We have
    \[
\liminf_{n\to\infty}\frac{\Gamma_n}{n^{1/4}} \ \ge\ 0.31258.
\]
This is about $9\%$ improvement of the previous best known lower bound.  
\end{theorem}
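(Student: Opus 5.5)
We would follow Nazarov's random polytope construction and optimize it more carefully. Take $N=\lfloor e^{\alpha\sqrt n}\rfloor$ independent directions $\theta_1,\dots,\theta_N$ uniform on $S^{n-1}$ (or Gaussian vectors normalized), and a radius $\rho=\rho(n)$, and set
\[
K=\{x\in\mathbb{R}^n:\ \langle x,\theta_i\rangle\le \rho\ \text{for all } i\}.
\]
The boundary of $K$ is covered by the facets $F_i=K\cap\{\langle x,\theta_i\rangle=\rho\}$, so
\[
\operatorname{GSA}(K)=\sum_{i=1}^N\int_{F_i}\varphi_n\,d\sigma .
\]
By symmetry, $\mathbb{E}\,\operatorname{GSA}(K)$ equals $N$ times the expected Gaussian measure of one facet. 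Writing $x=\rho\theta_1+y$ with $y\perp\theta_1$ shows that this is $N\varphi(\rho)\,\mathbb{P}\big(\langle \rho\theta_1+Y,\theta_j\rangle\le\rho\ \forall j\ge2\big)$, where $Y$ is a standard Gaussian in $\theta_1^\perp$. It therefore suffices to find parameters for which this expectation is at least $0.31258\,n^{1/4}$, and then pick a realization.

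The key steps, in order, are as follows.

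First, we choose the scaling $\rho=\sqrt{n}+$ a term of order $1$ and $N\asymp e^{c\sqrt n}$ times a power. Then $\varphi(\rho)$ is approximately $e^{-\rho^2/2}$, and $N\varphi(\rho)$ is of order $n^{1/4}$ times an explicit factor. More precisely, we parametrize $\rho^2=2\log N - \log\log N+\beta$, or equivalently put $\rho = \sqrt n\,\lambda$ with $N=e^{\lambda^2 n/2}\cdot n^{\gamma}\cdot c$. The factor $n^{1/4}$ must then emerge from the balance between $N\varphi(\rho)$ and the survival probability.

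Second, we compute the survival probability asymptotically. Condition on $Y$, so that $|Y|^2\approx n$ concentrates. For fixed $j$, $\langle\theta_j,\theta_1\rangle\approx g_j/\sqrt n$ and $\langle Y,\theta_j\rangle\approx h_j$ with $g_j,h_j$ approximately independent standard normals. The constraint $\langle \rho\theta_1+Y,\theta_j\rangle\le\rho$ then reads approximately
\[
h_j\le \rho\Big(1-\tfrac{g_j}{\sqrt n}\Big).
\]
Each constraint fails with probability roughly $\mathbb{P}(\text{Gaussian}>\rho(1-\dots))$, which is about $e^{-\rho^2/2}$ up to polynomial factors, modified by the fluctuation of $|Y|$. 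By independence over $j$, the survival probability is $\approx\exp(-N p)$, where
\[
p=\mathbb{P}\big(\langle \rho\theta_1+Y,\theta\rangle>\rho\big)
\]
for a single random $\theta$. We compute $p$ with a sharper Laplace-type expansion than Nazarov's: keep the exact spherical density of $\langle\theta,\theta_1\rangle$, the exact distribution of $|Y|$, and the correction $\varphi(t)/t$ versus $\varphi(t)$. The result should be of the form
\[
\mathbb{E}\,\operatorname{GSA}\approx n^{1/4}\,\sup_{s>0}\, s\,e^{-c s^2 - \dots}
\]
in terms of a normalized parameter $s$ measuring $N\varphi(\rho)/n^{1/4}$.

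Third, we optimize. Nazarov's crude estimate yields $e^{-5/4}\approx0.2865$. A more precise evaluation of the expectation $\mathbb{E}\exp(-Np(Y))$, averaging over the fluctuation of $|Y|$ rather than bounding it, together with an optimal choice of the additive constant in $\rho$, should give a one-variable function whose maximum is about $0.3126$. We would evaluate that maximum numerically with a rigorous margin.

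The main obstacle is the second step. The approximations by independent Gaussians must be justified with errors $o(1)$ at scale $n^{1/4}$, uniformly in $N\approx e^{c\sqrt n}$. In particular, the tail probabilities at level $\sim\sqrt n$ involve moderate deviations, where a naive CLT for $\langle\theta_j,\theta_1\rangle$ fails. We would handle this by using the exact beta density of $\langle\theta,\theta_1\rangle$ and exact Gaussian tails, and expanding $\log$ of the density up to the quadratic term, since the cubic corrections contribute $O(n^{-1/2}\cdot n^{3/4})$ and must be checked to vanish. If they do not, we would replace uniform directions by rescaled Gaussian vectors $\theta_j=G_j/\sqrt n$, for which the constraint is exactly Gaussian conditionally on $\theta_1$-components. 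That makes $p$ an explicit Gaussian integral and moves the difficulty into a clean optimization over the normalization.
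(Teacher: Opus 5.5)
There is a genuine gap, and it starts with the scaling. You take $\rho=\sqrt n+O(1)$ (or $\rho=\lambda\sqrt n$), but no choice of $N$ can make that work. For every polytope $K=\bigcap_j\{\langle x,\theta_j\rangle\le\rho\}$ in $\mathbb{R}^n$ one has $\langle x,\nu(x)\rangle=\rho$ on each facet. So the divergence theorem for the field $x\varphi_n(x)$ gives, with $X\sim N(0,I_n)$,
\[
\rho\,\operatorname{GSA}(K)=\int_K\bigl(n-|x|^2\bigr)\varphi_n(x)\,dx\le \mathbb{E}\bigl(n-|X|^2\bigr)_+=\tfrac12\,\mathbb{E}\bigl|n-|X|^2\bigr|\le\sqrt{n/2},
\]
that is, $\operatorname{GSA}(K)\le\sqrt{n/2}/\rho$. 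In your regime this is $O(1)$, and reaching $0.31258\,n^{1/4}$ forces $\rho<2.3\,n^{1/4}$. Your choice $N\asymp e^{\alpha\sqrt n}$ is also inconsistent with $\rho\approx\sqrt n$: then $N\varphi(\rho)= e^{\alpha\sqrt n-n/2+O(\sqrt n)}\to0$, where $\varphi(\rho)=e^{-\rho^2/2}/\sqrt{2\pi}$, so it is not of order $n^{1/4}$.

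The paper instead uses $\rho=a n^{1/4}$ and $N=\lfloor b/L(n,\rho)\rfloor$ with $L(n,\rho)=\frac{\varphi(\rho)}{\rho}e^{\rho^4/(4n)}$. Then $N\varphi(\rho)/n^{1/4}\to b\,a\,e^{-a^4/4}$, and the radial fluctuation $|y|=\sqrt{n-1}+w$ enters only through $p\le L\,e^{a^2w}(1+o(1))$. Fatou then gives
\[
\liminf_{n\to\infty}\frac{\Gamma(n)}{n^{1/4}}\ge \frac{b\,a\,e^{-a^4/4}}{\sqrt\pi}\int_{-W}^{W}\exp\bigl(-w^2-b\,e^{a^2w}\bigr)\,dw,
\]
which is evaluated at $a=1.2262$, $b=2.387$, $W=6$ with a certified trapezoid-rule error bound. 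Your proposal never identifies this two-parameter functional, never fixes parameters, and never does the numerics, so the constant $0.31258$ is not established. Your guessed form $s\,e^{-cs^2-\dots}$ also does not match the actual structure: the survival factor is the double exponential $\exp(-be^{a^2w})$ averaged against $e^{-w^2}$.

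Second, even at the correct scale, your treatment of the constraint loses exactly the term that produces the improvement. With $g_j,h_j$ independent Gaussians, the condition $h_j\le\rho(1-g_j/\sqrt n)$ gives $p\approx 1-\Phi\bigl(\rho/\sqrt{1+\rho^2/n}\bigr)\approx\frac{\varphi(\rho)}{\rho}e^{\rho^4/(2n)}$. Truncating the log-density at the quadratic term gives the same thing. The spherical marginal is even, so there is no cubic term to check. The next term in $\frac{n-2}{2}\log(1-t^2/S)$, with $S=r^2+\rho^2$, is the quartic one, $-\frac{n-2}{4S^2}t^4\approx -t^4/(4n)$. At $t=\rho$ it equals $-a^4/4$ and does not vanish, and it is what turns $e^{\rho^4/(2n)}$ into the correct $e^{\rho^4/(4n)}$. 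The paper captures it via $\log(1-u)\le -u-u^2/2$.

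Without this term the limiting functional becomes $\frac{ba}{\sqrt\pi}e^{-a^4/2}\int e^{-w^2-be^{a^2w}}\,dw$. Since $b\,e^{-be^{a^2w}}\le e^{-1-a^2w}$ and $\int e^{-w^2-a^2w}\,dw=\sqrt\pi\,e^{a^4/4}$, this is at most $\frac{a}{e}e^{-a^4/4}\le e^{-5/4}$. So that route cannot even beat Nazarov's constant.

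Finally, you do not need the two-sided, uniform $o(1)$ control you describe as the main obstacle. An upper bound on $p(r)$ for $|w|\le W$ together with Fatou's lemma already gives the lower bound on the $\liminf$.
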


In \cite{Nazarov2003}, the analytical argument is sophisticated, and several intermediate results are stated without proof. In Section~\ref{proofs}, we follow the same general strategy as in \cite{Nazarov2003}, while giving more detailed proofs and correcting a minor numerical inaccuracy in the original paper. We found that reasoning LLMs were able to work through these arguments, provide proofs for some statements left to the reader in \cite{Nazarov2003}, and fine-tune the parameters in Nazarov’s construction, leading to the present improvement.

\subsection{ $L_2$-$L_1$ moment comparison inequality}
For a function \(f:\{-1,1\}^n\to\mathbb{R}\) we write
\[
\|f\|_p:=\left(2^{-n}\sum_{x\in\{-1,1\}^n}|f(x)|^p\right)^{1/p},
\]
and we denote by \(\deg f\) its Fourier--Walsh degree, i.e.\ the degree of its multilinear expansion. We are interested in the optimal exponential base
\[
C_\ast:=\inf\Bigl\{C>0:\ \|f\|_2\le C^d\|f\|_1
\ \text{for every } d\ge 1,\ n\ge 1,\ \deg f\le d\Bigr\}.
\]

The problem of determining \(C_\ast\) was posed on MathOverflow by Noam Lifshitz in 2014, who asked in particular whether one can take \(C_\ast=\sqrt2\) \cite{MO184286}. The elementary example
\[
f(x)=\prod_{j=1}^d (1+x_j)
\]
already shows that \(C_\ast\ge \sqrt2\), since \(\|f\|_1=1\) and \(\|f\|_2=2^{d/2}\). The guess \(C_\ast=\sqrt2\) is also natural for structural reasons: for degree \(1\) it is exactly the sharp Khinchine inequality, due to Szarek \cite{Szarek76}; and if the same base \(\sqrt2\) were valid already for degree \(2\), then one would obtain the constant \(2\) for quadratic Rademacher chaoses, in accordance with the conjecture of Pe\l czy\'nski; see \cite[Remark~6]{IvanisviliTkocz}. On the upper-bound side, the real hypercontractivity yields \(C_\ast\le e\), see \cite[Theorem~9.22]{ODonnell}. A a more subtale argument via conformal maps, Hahn--Banach and Riesz representation theorem combined with complex hypercontractivity gives improvement $C_{*}\leq 2.69...$, see \cite{EskenazisIvanisvili}. We should also note that for \(d\)-homogeneous chaoses $h$ one has the sharper estimate
\[
\|h\|_2\le e^{d/2}\|h\|_1
\]
by Beckner's complex hypercontractivity \cite[Theorem~2]{IvanisviliTkocz}.

A useful observation is that if \(g\) is real-valued and \(f=g^2\), then
\[
\frac{\|f\|_2}{\|f\|_1}
=
\left(\frac{\|g\|_4}{\|g\|_2}\right)^2.
\]
Thus any family exhibiting large \(L^4/L^2\) growth immediately yields lower bounds for \(C_\ast\). The Gaussian model suggests taking \(g\) to be a Hermite polynomial evaluated on a long linear form. The next theorem shows that this already forces the base to be at least \(\sqrt3\).

\begin{theorem}\label{lem:sqrt3}
One has \(2.408...\geq C_\ast\ge \sqrt3\). 
\end{theorem}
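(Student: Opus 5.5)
Both bounds have to be proved separately.

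\emph{Lower bound $C_\ast\ge\sqrt3$.} Here I would follow the hint in the text just before the statement. Fix $k\ge1$ and let $\mathrm{He}_k$ be the probabilists' Hermite polynomial of degree $k$. Put $S_N=N^{-1/2}(x_1+\dots+x_N)$ on $\{-1,1\}^N$ and $g_N=\mathrm{He}_k(S_N)$. To make $g_N$ multilinear, reduce it using $x_j^2=1$; write $\tilde g_N$ for the result. Then $\tilde g_N=g_N$ as functions on the cube and $\deg \tilde g_N\le k$. Hence $f_N=\tilde g_N^2$, again multilinearised, has degree at most $d=2k$, and
\[
\frac{\|f_N\|_2}{\|f_N\|_1}=\Bigl(\frac{\|g_N\|_4}{\|g_N\|_2}\Bigr)^2 .
\]
By the CLT, with uniform integrability coming from bounded moments of $S_N$ (hypercontractivity or the Khintchine inequality), we get $\|g_N\|_p\to\|\mathrm{He}_k(Z)\|_p$ for $p=2,4$, where $Z$ is standard Gaussian. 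Therefore
\[
C_\ast^{2k}\ \ge\ \frac{\mathbb E\,\mathrm{He}_k(Z)^4}{(\mathbb E\,\mathrm{He}_k(Z)^2)^2}\quad\text{for every }k .
\]
Since $\mathbb E\,\mathrm{He}_k^2=k!$, it remains to show $\bigl(\mathbb E\,\mathrm{He}_k^4/(k!)^2\bigr)^{1/(2k)}\to\sqrt3$.

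For this I would use the linearisation formula
\[
\mathrm{He}_k^2=\sum_{r=0}^{k}\binom kr^2 r!\,\mathrm{He}_{2k-2r}.
\]
It gives
\[
\mathbb E\,\mathrm{He}_k^4=\sum_r \binom kr^4 (r!)^2(2k-2r)!,
\qquad
\frac{\mathbb E\,\mathrm{He}_k^4}{(k!)^2}=\sum_{r}\binom kr^2\binom{2k-2r}{k-r}.
\]
Set $r=k-s$. The summand is $\binom ks^2\binom{2s}{s}\approx\binom ks^2 4^s$. Its exponential rate is maximised by maximising $2H(t)+t\log4$ over $t=s/k$, which gives $t/(1-t)=2$, so $t=2/3$. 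The maximal rate is $\log(1+2)^2=\log 9$, by the identity $\max_t[2H(t)+2t\log2]=2\log(1+2)$. Hence the ratio is $9^{k+o(k)}$, and $C_\ast\ge 9^{1/4}=\sqrt3$. The main technical point in this half is this Laplace-type asymptotic, and it is routine. Justifying the CLT limit of the fourth moment is standard, since moments of $S_N$ converge.

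\emph{Upper bound $C_\ast\le 2.408\ldots$.} I expect this to be the main obstacle, since it must beat the existing $2.69$ from the conformal-map argument. I would follow the scheme of \cite{EskenazisIvanisvili}. Write $f=\sum_{j\le d}f_j$ in homogeneous parts. Apply the homogeneous bound $\|h\|_2\le e^{d/2}\|h\|_1$ of \cite[Theorem~2]{IvanisviliTkocz} (complex hypercontractivity) to the noise operator $T_z f=\sum z^jf_j$ with complex $z$, which is a contraction on $L_1$ for $z$ in a suitable region. Recover each $f_j$ by a Cauchy-type integral, i.e.\ by pairing against a measure on the boundary of that region via Hahn--Banach/Riesz. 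Optimising the contour should give a bound of the form
\[
\|f\|_2\le \inf_{\text{contours}}\ \max\text{-type expression}^{\,d}\,\|f\|_1 .
\]

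The improvement I would try is to replace a one-step $L_1\to L_2$ estimate by interpolation. First go $L_1\to L_q$ using the sharp complex hypercontractive region for $1<q<2$. Then bound $\|f\|_2\le\|f\|_q^{1-\theta}\|f\|_4^{\theta}$, with the $L_4$ norm controlled by the real $(2,4)$ hypercontractivity $3^{d/2}$. Optimise the resulting base numerically over $q$ and the contour. I am uncertain that this exact combination yields $2.408$, so the numerical optimisation of the contour/weight is the step I expect to need the most work.
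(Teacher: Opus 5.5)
\textbf{Upper bound: genuine gap.} Your plan for $C_\ast\le 2.408\ldots$ does not prove anything. It names no explicit function and no numerical value, and you say yourself you do not know whether the scheme reaches $2.408\ldots$.

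The premise is also mistaken: nothing has to beat the argument of \cite{EskenazisIvanisvili}. The paper's upper bound is that argument verbatim. It gives $C_\ast\le C$, where $C$ is defined in their equation (191) as the infimum of an explicit function. That infimum is in fact $2.408\ldots$ rather than $2.69\ldots$. What is missing is to reproduce that bound and evaluate the infimum correctly, not a new ingredient.

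The modifications you sketch also have concrete defects.
\begin{enumerate}[label=(\roman*)]
\item $T_z$ is not an $L_1$ contraction for any $z\notin[-1,1]$. For $f=2^n\mathbf 1_{\{y\}}$ one has $T_zf(x)=\prod_i(1+z x_iy_i)$, hence $\|T_zf\|_1=\bigl(\tfrac{|1+z|+|1-z|}{2}\bigr)^n$, which tends to infinity with $n$. So there is no starting estimate out of $L_1$ of the kind you describe.
\item The bound $e^{d/2}$ of \cite{IvanisviliTkocz} is for homogeneous chaoses, so it does not apply to $T_zf$.
\item Interpolating $L_2$ between $L_q$ and $L_4$ with the real constant $3^{d/2}$ is lossy. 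At $q=1$ it only yields $\|f\|_2\le 3^d\|f\|_1$, which is worse than the $e^d$ obtained by interpolating against $L_{2+\varepsilon}$.
\end{enumerate}

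\textbf{Lower bound: correct, by a different route.} The paper quotes Larsson-Cohn's asymptotics $\|\mathrm{He}_m\|_{L^4(\gamma)}\sim c(4)m^{-1/4}\sqrt{m!}\,3^{m/2}$. You instead compute $\mathbb E\,\mathrm{He}_k^4/(k!)^2=\sum_s\binom{k}{s}^2\binom{2s}{s}$ exactly from the product formula and read off the rate $9$ by maximising entropy. Your route is elementary and self-contained; for a lower bound, the single term $s\approx 2k/3$ already suffices. The paper's citation additionally gives the polynomial factor, which is not needed here.

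One slip needs fixing. Since $\|f_N\|_2/\|f_N\|_1=(\mathbb E g_N^4)^{1/2}/\mathbb E g_N^2$, the limiting inequality is $C_\ast^{4k}\ge \mathbb E\,\mathrm{He}_k(Z)^4/(k!)^2$, not $C_\ast^{2k}\ge\mathbb E\,\mathrm{He}_k(Z)^4/(k!)^2$. As you wrote it, the rate $9^{k+o(k)}$ would force $C_\ast\ge 3$, contradicting the upper bound. Your final value $9^{1/4}=\sqrt3$ is what the corrected inequality gives.
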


We should note that the improvement in the upper bound from \(2.69\ldots\) to \(2.408\ldots\) was also known to the authors of \cite{EskenazisIvanisvili}. They did not revise the paper to reflect this, since the refinement follows verbatim from the argument in \cite{EskenazisIvanisvili}: one only needs to observe that the constant \(C\), defined in equation (191) there as the infimum of an explicit function, is in fact equal to \(2.408\ldots\).

On the lower-bound side, we note that the failure of the conjecture
\(C_\ast=\sqrt{2}\) in this generality was already known to some experts;
see, for instance, A. Samorodnitsky (private communication). Thus,
Theorem~\ref{lem:sqrt3} should not be regarded as a new mathematical fact
in itself. What is striking, however, is that reasoning models such as
Grok were able to recover both bounds, even though these observations do
not seem to have been recorded in the published literature.

\subsection{Optimal balanced Szarek's inequality}
The Khintchine inequality states that for independent Rademacher random variables
\(\varepsilon_1,\dots,\varepsilon_n\) and every \(p\geq 1\),
\[
A_p\Big(\sum_{i=1}^n a_i^2\Big)^{1/2}
\le
\Big(\mathbb{E}\Big|\sum_{i=1}^n a_i\varepsilon_i\Big|^p\Big)^{1/p}
\le
B_p\Big(\sum_{i=1}^n a_i^2\Big)^{1/2},
\]
where the constants \(A_p,B_p > 0\) depend only on \(p\). Determining the optimal constants in this inequality has attracted considerable attention: in
particular, the optimal constant when $p=1$ is due to Szarek \cite{Szarek76}, while the optimal constants for all $p$ were determined by Haagerup \cite{Haagerup81}.

A natural analogue is obtained by replacing the discrete cube \(\{-1,1\}^n\) with the middle
slice
\[
\Omega_n:=\Bigl\{x\in\{-1,1\}^n:\sum_{i=1}^n x_i=0\Bigr\},
\]
equipped with the uniform measure (here we need $n$ be even). Equivalently, one conditions the Rademacher random vector on the
event \(\sum_{i=1}^n x_i=0\) . In this model the coordinates are no longer independent, so the
classical proofs of Khintchine inequality may not directly apply. For \(p\ge 2\), such balanced Khintchine inequalities were studied by Spektor
\cite{Spektor16}. Later, Herscovici and Spektor \cite{HerscoviciSpektor20} computed the optimal constants when $p$ is even integer. By
contrast, the optimal constant for the endpoint \(p=1\) does not seem to have been documented in the literature. 

In the result below, we derive such balanced Khintchine-type iequality at the endpoint \(p=1\) with optimal constant, which may be viewed as a balanced analogue of Szarek's sharp \(L_1\) inequality.  Somewhat surprisingly, despite the loss of independence, the semigroup framework due to Kwapie\'n, Lata\l a, and Oleszkiewicz \cite{KwapienLatalaOleszkiewicz96,LatalaOleszkiewicz94,LatalaOleszkiewicz94} (see the second proof of Theorem 32 in \cite{NT23} for a modern presentation) still remains effective in obtaining the optimal constant. To deal with the lack of independence, we invoke a recent result of Filmus \cite{Filmus16} on orthogonal bases for the slice of the Hamming cube.

\begin{theorem}[Balanced Szarek's inequality]\label{Szarek}
Let \(n\in\mathbb{N}\) be even and \(n\ge 4\). Let \(x\) be distributed uniformly on
\[
\Omega_n:=\Bigl\{x\in\{-1,1\}^n:\sum_{i=1}^n x_i=0\Bigr\},
\]
and let \(a=(a_1,\dots,a_n)\in\mathbb{R}^n\). Then
\[
c_n \sqrt{\mathbb{E}\Big|\sum_{i=1}^n a_i x_i\Big|^2}
\le
\mathbb{E}\Big|\sum_{i=1}^n a_i x_i\Big|,
\]
where
\[
c_n=\sqrt{\frac{n-2}{2(n-1)}}
\]
is optimal.
\end{theorem}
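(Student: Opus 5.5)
We may assume $\sum_i a_i=0$. On $\Omega_n$ we have $\sum_i x_i=0$, so replacing $a_i$ by $a_i-\bar a$ does not change $S:=\sum_i a_ix_i$. The idea is to run the Lata\l a--Oleszkiewicz spectral-gap argument on the slice, with Filmus' harmonic decomposition standing in for the Walsh expansion. Set $f:=|S|$. Then
\[
\mathbb{E}S^2-(\mathbb{E}|S|)^2=\operatorname{Var}(f),
\]
so it suffices to prove
\[
\operatorname{Var}(f)\le \tfrac{n}{2(n-1)}\,\mathbb{E}S^2,
\]
since $1-\frac{n}{2(n-1)}=\frac{n-2}{2(n-1)}=c_n^2$.

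\textbf{Spectral setup.} On $\Omega_n$ consider the transposition Laplacian $Lg(x)=\sum_{i<j}\bigl(g(x)-g(\tau_{ij}x)\bigr)$ and its Dirichlet form
\[
\mathcal{E}(g,g)=\tfrac12\,\mathbb{E}\sum_{i<j}\bigl(g(x)-g(\tau_{ij}x)\bigr)^2 .
\]
By Filmus \cite{Filmus16}, $L^2(\Omega_n)=\bigoplus_{k=0}^{n/2}V_k$, where $V_k$ is spanned by the degree-$k$ harmonic basis elements. $L$ acts on $V_k$ as multiplication by $\lambda_k=k(n-k+1)$, which is the standard Johnson-scheme eigenvalue; I would verify this directly on Filmus' basis. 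This gives $\lambda_1=n$ and $\lambda_2=2(n-1)$, with $\lambda_k\ge\lambda_2$ for $2\le k\le n/2$; the assumption $n\ge4$ ensures $V_2$ is present.

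The space $V_1$ consists exactly of the linear forms $\sum_i b_ix_i$ with $\sum_i b_i=0$. In particular $S\in V_1$, so $\mathcal{E}(S,S)=n\,\mathbb{E}S^2$.

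\textbf{Key steps.}

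1. \emph{$f$ has no level-one component.} The map $x\mapsto -x$ preserves $\Omega_n$ and the uniform measure. Under it $f$ is even, while every element of $V_1$ is odd. Hence $f\perp V_1$, and so $f-\mathbb{E}f\in\bigoplus_{k\ge2}V_k$.

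2. \emph{Spectral gap on $\bigoplus_{k\ge 2}V_k$.} Step 1 gives
\[
\operatorname{Var}(f)\le \lambda_2^{-1}\mathcal{E}(f,f)=\tfrac{1}{2(n-1)}\,\mathcal{E}(f,f).
\]

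3. \emph{Contraction.} Pointwise $\bigl||u|-|v|\bigr|\le|u-v|$, so
\[
\mathcal{E}(f,f)\le\mathcal{E}(S,S)=n\,\mathbb{E}S^2 .
\]

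Combining steps 2 and 3 yields $\operatorname{Var}(f)\le\frac{n}{2(n-1)}\mathbb{E}S^2$, which is the desired inequality.

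\textbf{Optimality.} Take $a=e_1+e_2$. Then $|S|\in\{0,2\}$ and $\mathbb{P}(x_1=x_2)=\frac{n-2}{2(n-1)}$. Therefore
\[
\mathbb{E}|S|=\tfrac{n-2}{n-1},\qquad \mathbb{E}S^2=\tfrac{2(n-2)}{n-1},
\]
and the ratio $\mathbb{E}|S|/\sqrt{\mathbb{E}S^2}$ equals exactly $c_n$.

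\textbf{Main obstacle.} The only nontrivial input is step 2, namely identifying the eigenvalues of $L$ on Filmus' degree-$k$ spaces. Two things must be checked: that $\lambda_k$ is increasing in $k$ over the range $1\le k\le n/2$ (it is, since $k\le n/2$), and that $V_1$ is precisely the space of balanced linear forms. I would confirm $\lambda_k=k(n-k+1)$ by applying $L$ to a single Filmus basis element $\prod_{t=1}^{k}(x_{a_t}-x_{b_t})$. The action of each transposition on such a product can be computed directly, and the claimed eigenvalue follows from that computation.
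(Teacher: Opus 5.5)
Your proposal is correct and follows essentially the paper's argument. Both use Filmus' decomposition of the slice, use the evenness of $|S|$ under $x\mapsto -x$ to remove the level-one component, and apply the spectral gap $\lambda_2=2(n-1)$ (the paper's $\frac{4}{n}$, normalized by $\binom{n}{2}$) to reach an energy bound $\langle f,Lf\rangle\le n\,\mathbb{E}S^2$. The only difference is cosmetic and lies in that last bound. You get it from the Dirichlet-form contraction $\mathcal{E}(|S|,|S|)\le\mathcal{E}(S,S)$ together with $LS=nS$. The paper instead derives the pointwise estimate $Lf\le\frac{2}{n-1}f$ from the triangle inequality $\sum_{i<j}|S^{(i,j)}|\ge\bigl|\sum_{i<j}S^{(i,j)}\bigr|$ and then integrates against $f\ge 0$.
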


\begin{remark}
The optimality is witnessed by the following examples, which is also the extremizer for classical Szarek's inequality. Let $a=(1, 1, 0,...,0)$. Then $|x_1+x_2|=2$ with probability $\frac{2\binom{n-2}{n/2 -2}}{\binom{n}{n/2}}=\frac{n-2}{2(n-1)}$, and $|x_1+x_2|=0$ otherwise. Thus, $\frac{\mathbb{E}|\sum_{i=1}^n a_i x_i|}{\sqrt{\mathbb{E}|\sum_{i=1}^n a_i x_i|^2}} = \sqrt{\frac{n-2}{2(n-1)}}$.
\end{remark}

\subsection{An autoconvolution inequality and $g$-Sidon sets}

Let an integrable $f : \mathbb{R} \to \mathbb{R}_{\ge 0}$ be supported on $[-1/4,1/4]$. We are interested in the largest universal constant $c>0$ such that
\[
\sup_{x\in\mathbb{R}} f*f(x)
\ge c \left( \int_{-1/4}^{1/4} f(x)\,dx \right)^2
\]
holds for every such function $f$. This constant \(c\) corresponds to the asymptotic size estimate for the  \(g\)-Sidon sets: a set \(A\subset\{1,2,\dots,n\}\) is called \(g\)-Sidon if every integer \(m\) has at most \(g\) representations of the form \(m=a+b\) with \(a,b\in A\). Denoting the largest size of a \(g\)-Sidon set for fixed $n$ and $g$ as
\[
\beta_g(n):=\max\{|A|:A\subset\{1,2,\dots,n\}\text{ is }g\text{-Sidon}\},
\]
\cite{MatolcsiVinuesa10} showed that the leading asymptotic constant for \(\beta_g(n)\) is a universal constant \(\sigma\) when $n$ and $g$ go to infinity, which can be reformulated in terms of the constant \(c\) as
\[
c=\frac{2}{\sigma^2}.
\]
Thus, studying the sharp autoconvolution inequality is equivalent to determining the leading asymptotic constant for the maximal size of \(g\)-Sidon sets. There have been many efforts devoted to bounding the constant \(c\) from above and below
\cite{SchinzelSchmidt02,MartinOBryant07,MartinOBryant09,MatolcsiVinuesa10,CloningerSteinerberger17,Georgiev25,Wang25,Yuksekgonul26}, where the best published bounds are
\[
1.28 \le c \le 1.50286.
\]

Recent progress on the upper bound \cite{Georgiev25,Wang25,Yuksekgonul26} is obtained through automatic reasoning systems assisted with LLMs, which iteratively modify Python codes describing candidate mathematical objects. At a high level, the reasoning LLMs in \cite{Georgiev25,Wang25,Yuksekgonul26} are used to explore the space formed by certain programming codes. One may naturally ask whether further improvements can still be achieved using existing techniques from the literature, and whether large language models can help uncover and exploit this remaining potential.  We provide positive evidence on this specific question.  In the following theorem, we provide an improved lower bound by refining an estimate in \cite{CloningerSteinerberger17}, which was suggested by LLMs.
\begin{theorem} \label{sidon}
 Let $f : \mathbb{R} \to \mathbb{R}_{\ge 0}$ be supported on $[-1/4,1/4]$. Then for every such function $f$, the following holds true
\[
\sup_{x\in\mathbb{R}} f*f(x)
\ge 1.2802 \left( \int_{-1/4}^{1/4} f(x)\,dx \right)^2.
\]   
\end{theorem}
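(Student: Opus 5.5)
We follow the Cloninger--Steinerberger framework \cite{CloningerSteinerberger17}: discretize, reduce to a finite family of step-function problems, and check a computable lower bound on each member. The improvement over $1.28$ comes from sharpening the error term in the discretization step rather than from a finer search.

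\emph{Normalization.} By scaling, assume $\int f=1$. Partition $[-1/4,1/4]$ into $2n$ intervals $I_1,\dots,I_{2n}$ of length $1/(4n)$, and set $a_i=4n\int_{I_i}f$, so that $\sum_i a_i=4n$. Let $g=\sum_i a_i\mathbf 1_{I_i}/(4n)\cdot 4n$ be the associated step function with the same interval masses. For every pair of consecutive indices the quantity
\[
\sum_{i+j=k}a_ia_j
\]
controls the mass of $g*g$ over a window of length comparable to $1/(4n)$. More generally, summing over a range of $k$ gives the averages of $g*g$ over windows $[x,x+\ell/(4n)]$.

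\emph{Comparing $f*f$ with $g*g$.} The key inequality has the form
\[
\sup_x f*f(x)\ \ge\ \frac{1}{|W|}\int_W f*f \ \ge\ \frac{1}{|W|}\int_W g*g-\text{(error)} .
\]
In \cite{CloningerSteinerberger17} the error is bounded crudely, by something like $2/n+1/n^2$ in normalized units. This is because only the boundary blocks of the window $W$ see the difference between $f$ and $g$. I would sharpen it in two ways. First, I would make the error depend on the boundary masses $a_i$ actually involved, for instance a term like $(2a_{\max}+\dots)/(4n)^2$ weighted by the specific boundary indices, rather than a uniform worst case. Second, I would optimize over window lengths $\ell$, using the fact that the error scales like $1/\ell$. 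This refined bound is the ``refined estimate suggested by LLMs.''

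\emph{Finite verification.} With the refined error, $\sup f*f\ge 1.2802$ would follow from showing that for every vector $a\in\mathbb{R}_{\ge0}^{2n}$ with $\sum_i a_i=4n$ there exist $k,\ell$ with
\[
\frac{1}{4n\ell}\sum_{k\le i+j\le k+\ell-2}a_ia_j-\mathrm{err}(a,k,\ell)\ \ge\ 1.2802 .
\]
This reduces to a finite check. Quantize $a$ to a grid of step $1/m$, which costs an additional explicit error term. Use the symmetry $a\mapsto$ reverse, and prune any branch in which a partial sum already forces a large window average; this is the branch-and-bound from \cite{CloningerSteinerberger17}. Because the refined error is smaller, the same parameters $(n,m)$ as theirs yield $1.2802$ instead of $1.28$. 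This means the computation can be rerun with essentially the same structure, or the certificates in their search can simply be re-thresholded.

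\emph{Main obstacle.} The hardest step is proving the refined discretization error rigorously. One must show that
\[
\int_W f*f-\int_W g*g
\]
is bounded below by a quantity depending only on the masses $a_i$ near the boundary of $W$, uniformly over all $f$ with given interval masses. I would try to do this by writing $f*f-g*g$ in terms of the contributions of pairs of blocks $(I_i,I_j)$. Pairs whose Minkowski sum $I_i+I_j$ lies entirely inside or entirely outside $W$ contribute exactly zero. Each straddling pair loses at most $a_ia_j\cdot\min(\cdot)$, and this can be bounded via $a_ia_j\le$ a combination of the masses at the boundary indices. The second delicate point is ensuring that the quantization-plus-search error budget actually leaves the margin $0.0002$.
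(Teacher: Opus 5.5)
There is a genuine gap: you have put the slack in a step that has none. Your own verification quantity $\frac{1}{4n\ell}\sum_{k\le i+j\le k+\ell-2}a_ia_j$ counts only pairs of blocks with $I_i+I_j$ contained in the window $W$. Each such pair contributes exactly $a_ia_j/(4n)^2$ to $\int_W f*f$. Every straddling pair contributes a nonnegative amount because $f\ge 0$, so it is simply dropped. Thus $\sup f*f\ge\frac{1}{|W|}\int_W f*f\ge\frac{1}{4n\ell}\sum_{k\le i+j\le k+\ell-2}a_ia_j$ holds with \emph{zero} error. Consequently $\mathrm{err}(a,k,\ell)$, the boundary-mass refinement, and your ``main obstacle'' all concern a loss that does not occur. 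The maximization over windows $(k,\ell)$ is likewise already part of the definition of $b_{n,m}$.

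The error $\frac{2}{m}+\frac{1}{m^2}$ in Cloninger--Steinerberger's Lemma~3 is indexed by the height-grid parameter $m=50$, not by the number of blocks $n=24$. It is exactly the cost of your step ``quantize $a$ to a grid of step $1/m$,'' which you leave untouched. So nothing in your plan produces the gain of $0.0002$, and the claim that the same $(n,m)$ now give $1.2802$ is unsupported.

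The missing idea sits in that quantization step. Let $g$ be the step function with heights $b_i\in\frac1m\mathbb{N}$, $b\in B_{n,m}$, and let $\varepsilon$ be the step function with heights $a_i-b_i$, so $\|\varepsilon\|_\infty\le\frac1m$. The relevant expansion is $(g+\varepsilon)*(g+\varepsilon)=g*g+2g*\varepsilon+\varepsilon*\varepsilon$, and Cloninger--Steinerberger bound the last term by $\|\varepsilon*\varepsilon\|_\infty\le\frac{1}{m^2}$. Since $\varepsilon$ is supported on $[-\frac14,\frac14]$, one has $\|\varepsilon\|_1\le\frac12\|\varepsilon\|_\infty\le\frac{1}{2m}$. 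Young's inequality then gives $\|\varepsilon*\varepsilon\|_\infty\le\|\varepsilon\|_\infty\|\varepsilon\|_1\le\frac{1}{2m^2}$, i.e.\ $c\ge b_{n,m}-\frac2m-\frac{1}{2m^2}$. Because this is a uniform constant improvement, their already certified bound $b_{24,50}\ge 1.28+\frac{2}{50}+\frac{1}{2500}$ yields $c\ge 1.28+\frac{1}{5000}=1.2802$ with no new computation.

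A data-dependent error like yours would instead change the objective of the min--max problem. Their search could then not simply be ``re-thresholded'': you would have to redo a computation of comparable cost, with no argument that it clears $1.2802$.
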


The improvement can be obtained from Grok with a single natural prompt, simply by asking it to tighten an inequality in \cite{CloningerSteinerberger17}. The improvement is comparable to those obtained for the upper bound by LLM-based reasoning systems; see $C_{1a}$ constant \cite{OptConstC1a} for a neat record of recent progress on this question. From a mathematical perspective, however, this improvement is not particularly substantial, as it represents only a minor refinement of the estimate in \cite{CloningerSteinerberger17}. Moreover, we do not believe the authors missed anything essential, as their primary goal was to showcase the methodology. Nevertheless, we find this result impressive, since it shows that a purely conversational approach may extract a further improvement for a problem that has been extensively studied through structured and explicitly parameterized exhaustive search.



\section{Proofs}\label{proofs}

\subsection{Proof of Theorem~\ref{Gaussian}}
The proof presented in this section is the same as Nazarov's construction of a random convex set in \cite{Nazarov2003}. We show that a choice of specific parameters improves the previous lower bound. The random convex set is constructed in $\mathbb{R}^{n+1}$ as in Nazarov \cite{Nazarov2003} to avoid indexing $\varphi$, and we write $\Gamma_n:=\Gamma(n)$ for brevity.
\medskip

\begin{lemma}[Nazarov's exact expectation identity]\label{lem:ExpGSA}
Fix $n\ge 1$, $\rho>0$, and an integer $N\ge 1$.
Let $x_1,\dots,x_N$ be i.i.d.\ uniform on the unit sphere $S^n\subset\mathbb{R}^{n+1}$ and set
\[
Q:=\bigcap_{j=1}^N\{x\in\mathbb{R}^{n+1}:\ \langle x,x_j\rangle\le \rho\}.
\]
Then
\begin{equation}\label{eq:ExpGSA}
\mathbb{E}\,\operatorname{GSA}(Q)
=
N\,\frac{1}{\sqrt{2\pi}}e^{-\rho^2/2}
\int_{\mathbb{R}^n}\varphi_n(y)\,\bigl(1-p(|y|)\bigr)^{N-1}\,dy,
\end{equation}
where $p(r)$ depends only on $r\ge 0$ and is given by
\begin{equation}\label{eq:p_exact}
p(r)=
\Bigg(\int_{-\sqrt{r^2+\rho^2}}^{\sqrt{r^2+\rho^2}}
\Bigl(1-\frac{t^2}{r^2+\rho^2}\Bigr)^{\frac{n-2}{2}}\,dt\Bigg)^{-1}
\int_{\rho}^{\sqrt{r^2+\rho^2}}
\Bigl(1-\frac{t^2}{r^2+\rho^2}\Bigr)^{\frac{n-2}{2}}\,dt .
\end{equation}
\end{lemma}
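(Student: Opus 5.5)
The plan is to compute $\operatorname{GSA}(Q)$ facet by facet and then take expectations using independence and rotation invariance. Almost surely the points $x_1,\dots,x_N$ are pairwise distinct, so the hyperplanes $H_j:=\{x:\langle x,x_j\rangle=\rho\}$ are pairwise distinct. Then $\partial Q\subseteq\bigcup_j (H_j\cap Q)$. Moreover, for $i\neq j$ the set $H_i\cap H_j$ has dimension at most $n-1$, so it is $\sigma$-null inside $H_j$. Conversely, every point of $H_j\cap Q$ lies on $\partial Q$, since just beyond $H_j$ the constraint $\langle x,x_j\rangle\le\rho$ fails. Hence, almost surely,
\[
\operatorname{GSA}(Q)=\sum_{j=1}^N\int_{H_j}\varphi_{n+1}(x)\,\mathbf 1\{\langle x,x_i\rangle\le\rho\ \forall i\ne j\}\,d\sigma(x).
\]
This holds whether or not $Q$ is bounded, since the integrands are nonnegative.

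Fix $j$ and parametrize $H_j$ by $x=\rho x_j+y$, where $y$ ranges over the $n$-dimensional subspace $x_j^\perp$. Then $|x|^2=\rho^2+|y|^2$, so
\[
\varphi_{n+1}(x)=\tfrac{1}{\sqrt{2\pi}}e^{-\rho^2/2}\,\varphi_n(y).
\]
Condition on $x_j$ and apply Fubini/Tonelli to interchange the expectation over $\{x_i\}_{i\ne j}$ with the integral over $y$. Since the $x_i$ are independent of each other and of $x_j$, the conditional expectation of the indicator factors as $\prod_{i\neq j}\mathbb P(\langle x,x_i\rangle\le\rho)=(1-p(x))^{N-1}$. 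Here $p(x):=\mathbb P(\langle x,u\rangle>\rho)$ for $u$ uniform on $S^n$. By rotation invariance of $u$, $p(x)$ depends only on $|x|=\sqrt{r^2+\rho^2}$ with $r=|y|$.

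The resulting integral $\int_{x_j^\perp}\varphi_n(y)(1-p(|y|))^{N-1}\,dy$ does not depend on $x_j$, again by rotation invariance. So each of the $N$ facets contributes the same amount, which gives the factor $N$ and the identity \eqref{eq:ExpGSA}.

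It remains to identify $p(r)$ as in \eqref{eq:p_exact}. Write $\langle x,u\rangle=|x|\,u_1$, where $u_1$ is the first coordinate of a uniform point on $S^n\subset\mathbb R^{n+1}$. Its density on $[-1,1]$ is proportional to $(1-s^2)^{\frac{n-2}{2}}$. This is the standard slicing fact: the slice $\{u_1=s\}$ is a sphere $S^{n-1}$ of radius $\sqrt{1-s^2}$, with area $\propto(1-s^2)^{\frac{n-1}{2}}$, and the arclength factor is $(1-s^2)^{-1/2}$. Then
\[
p(r)=\mathbb P\bigl(u_1>\rho/\sqrt{r^2+\rho^2}\bigr).
\]
Substituting $t=\sqrt{r^2+\rho^2}\,s$ gives exactly the normalized ratio in \eqref{eq:p_exact}, because the Jacobian factors cancel between numerator and denominator.

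I expect the main point needing care to be the first step: justifying that $\operatorname{GSA}(Q)$ equals the sum of facet integrals. This requires the almost-sure genericity of the $x_j$ and the observation that overlaps of facets are lower-dimensional. Once that is settled, everything else is Fubini together with the spherical marginal density.
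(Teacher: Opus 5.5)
Your proposal is correct and follows essentially the same route as the paper. Both arguments decompose $\partial Q$ into the facets $H_j\cap Q$, factor $\varphi_{n+1}$ on $H_j$ as $\frac{1}{\sqrt{2\pi}}e^{-\rho^2/2}\varphi_n(y)$, use Tonelli and independence to get $(1-p)^{N-1}$, and use the spherical marginal density $(1-s^2)^{(n-2)/2}$. The differences are cosmetic: you parametrize each facet over $x_j^\perp$ rather than rotating $x_1$ to $e_{n+1}$, and you derive the marginal density by slicing rather than citing it.
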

\begin{remark}
In \cite{Nazarov2003}, the exponent of the integrand in the expression for $p(r)$ was given as $\frac{n-1}{2}$ instead of $\frac{n-2}{2}$ without proof, which is a minor inaccuracy that does not affect the final result. Here we include a proof for the reader's convenience.
\end{remark}
\begin{proof}
Since the laws of $x_j$'s are continuous and $x_j$ are independent, we have
$\mathbb{P}(x_i=x_j\text{ for some }i\neq j)=0$. Thus we may assume the normals
$x_1,\dots,x_N$ are pairwise distinct. For each $j$, let
\[
H_j:=\{x\in\mathbb{R}^{n+1}:\ \langle x,x_j\rangle=\rho\},
\qquad
F_j:=Q\cap H_j.
\]
Each $F_j$ is a (possibly empty) facet of $Q$ with outer unit normal $x_j$.
Moreover, $\partial Q$ is covered by the union of facets, and intersections
$F_i\cap F_j$ ($i\neq j$) have codimension at least $2$, hence $\sigma$-measure $0$.
Therefore,
\[
\operatorname{GSA}(Q)=\int_{\partial Q}\varphi_{n+1}(x)\,d\sigma(x)
=\sum_{j=1}^N\int_{F_j}\varphi_{n+1}(x)\,d\sigma(x).
\]
Taking expectation and using the fact that $\{x_i\}_{i \in [N]}$ has the same distribution,
\[
\mathbb{E}\,\operatorname{GSA}(Q)
=
N\cdot \mathbb{E}\!\left[\int_{F_1}\varphi_{n+1}(x)\,d\sigma(x)\right].
\]
By rotational invariance of $\varphi_{n+1}$, we may assume $x_1=e_{n+1}$. Therefore \[
\int_{F_1}\varphi_{n+1}(x)\,d\sigma(x) = \frac{1}{\sqrt{2\pi}}e^{-\rho^2/2}\int_{\mathbb{R}^n} \varphi_n(y) \mathbf{1}_{\{y:(y,\rho) \in F_1\}} dy,
\]
and by Tonelli, we have that 
\[
\mathbb{E}\!\left[\int_{F_1}\varphi_{n+1}(x)\,d\sigma(x)\right] =  \frac{1}{\sqrt{2\pi}}e^{-\rho^2/2}\int_{\mathbb{R}^n} \varphi_n(y) \mathbb{P}\big((y,\rho) \in F_1 \big) dy
\]
It remains to compute $\mathbb{P}\big((y,\rho) \in F_1 \big)$. For fixed $y\in\mathbb{R}^n$, the point $(y,\rho)$ belongs to $F_1$ iff it is not cut off
by any of the other half-spaces, i.e.
\[
\langle (y,\rho),x_j\rangle\le \rho,\qquad j=2,\dots,N.
\]
Let $r:=|y|$ and define
\[
p(r):=\mathbb{P}_{x\sim \mathrm{Unif}(S^n)}\big(\langle (y,\rho),x\rangle>\rho\big),
\]
which depends only on $|(y,\rho)|$ by rotation invariance of  $\mathrm{Unif}(S^n)$. Thus it only depends on $r$ for fixed $\rho$.
By independence,
\[
\mathbb{P}\big((y,\rho)\in F_1)=(1-p(r))^{N-1}.
\]
Hence
\[
\mathbb{E}\!\left[\int_{F_1}\varphi_{n+1}\,d\sigma\ \right]
=
\frac{1}{\sqrt{2\pi}}e^{-\rho^2/2}
\int_{\mathbb{R}^n}\varphi_n(y)\,(1-p(|y|))^{N-1}\,dy,
\]
and multiplying by $N$ gives \eqref{eq:ExpGSA}.

\emph{It remains to compute $p(r)$.} Fix $y$ with $|y|=r$ and put $u:=(y,\rho)\in\mathbb{R}^{n+1}$. For $x\sim \mathrm{Unif}(S^n)$, by rotational invariance, the random variable
$\frac{t}{|u|}:=\langle \frac{u}{|u|},x\rangle$ has the same distribution as $x_1$ as we may assume $\frac{u}{|u|}=e_1$. By \cite[Eq.~(8)]{BubeckDingEldanRacz16}, $x_1$ has density proportional to
\[
(1-|x_1|^2)^{(n-2)/2}.
\]
Therefore,
\[
p(r)=
\frac{\displaystyle\int_{\rho}^{|u|}
\left(1-\frac{t^2}{|u|^2}\right)^{(n-2)/2}\,dt}
{\displaystyle\int_{-|u|}^{|u|}
\left(1-\frac{t^2}{|u|^2}\right)^{(n-2)/2}\,dt}.
\]
Substituting $|u|=\sqrt{r^2+\rho^2}$ completes the proof. 
\end{proof}

\medskip

\begin{lemma}[Radialization]
 As in Nazarov \cite{Nazarov2003}, define
\begin{equation}\label{eq:f_c}
f(t):=t^{n-1}e^{-t^2/2},\qquad
c:=\Big(\int_0^\infty f(t)\,dt\Big)^{-1}.
\end{equation}
Then for every $n\geq 1$, every measurable $g:[0,\infty)\to[0,\infty]$,
\begin{equation}\label{eq:radialize}
\int_{\mathbb R^n}\varphi_n(y)\,g(|y|)\,dy
=
c\int_0^\infty f(r)\,g(r)\,dr.
\end{equation}
\end{lemma}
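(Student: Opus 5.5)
The identity \eqref{eq:radialize} is the standard polar-coordinates formula for radial integrands, so I will prove it by integrating in spherical coordinates and then identifying the normalizing constant through the special case $g\equiv 1$.

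\textbf{Step 1: polar coordinates.} Write $y=r\theta$ with $r\ge 0$ and $\theta\in S^{n-1}$, so that $dy=r^{n-1}\,dr\,d\sigma_{n-1}(\theta)$, where $\sigma_{n-1}$ is surface measure on $S^{n-1}$. For $n=1$ this is read as $S^0=\{\pm1\}$ with counting measure. Since the integrand $\varphi_n(y)g(|y|)$ is nonnegative and depends only on $|y|$, Tonelli's theorem applies with no integrability assumption on $g$ (values $+\infty$ are allowed). This gives
\[
\int_{\mathbb R^n}\varphi_n(y)g(|y|)\,dy=|S^{n-1}|(2\pi)^{-n/2}\int_0^\infty r^{n-1}e^{-r^2/2}g(r)\,dr=\kappa_n\int_0^\infty f(r)g(r)\,dr ,
\]
where $\kappa_n:=|S^{n-1}|(2\pi)^{-n/2}$ does not depend on $g$.

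\textbf{Step 2: identify the constant.} Take $g\equiv 1$. The left-hand side is then $\int\varphi_n=1$, so $\kappa_n\int_0^\infty f=1$. Since $0<\int_0^\infty f<\infty$, this forces $\kappa_n=c$ as defined in \eqref{eq:f_c}, and this choice avoids computing $|S^{n-1}|$ through Gamma functions.

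\textbf{Main obstacle.} There is no real obstacle. The only points needing care are the measurability of $y\mapsto g(|y|)$, which holds because it is a composition of a Borel function with a continuous map, and the degenerate case $n=1$. In that case $f(t)=e^{-t^2/2}$ and $c=\sqrt{2/\pi}$, and the identity follows directly from the evenness of $\varphi_1$.
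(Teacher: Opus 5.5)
Your proposal is correct and is essentially the paper's proof: polar coordinates with Tonelli give the constant $(2\pi)^{-n/2}|S^{n-1}|$ in front of $\int_0^\infty f(r)g(r)\,dr$, and the case $g\equiv 1$ identifies that constant with $c$. Your remarks on measurability and on the case $n=1$ are correct additions that the paper leaves implicit.
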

\begin{proof}
Let \(d\sigma\) denote the usual surface measure on \(S^{n-1}\). By polar coordinates and Tonelli,
\begin{align*}
\int_{\mathbb R^n}\varphi_n(y)g(|y|)\,dy
&=(2\pi)^{-n/2}\int_0^\infty\int_{S^{n-1}}
e^{-r^2/2}g(r)r^{n-1}\,d\sigma(\theta)\,dr \\
&=(2\pi)^{-n/2}|S^{n-1}|
\int_0^\infty r^{n-1}e^{-r^2/2}g(r)\,dr.
\end{align*}
Taking \(g\equiv 1\) gives
\[
1=(2\pi)^{-n/2}|S^{n-1}|\int_0^\infty f(r)\,dr,
\]
and hence
\[
(2\pi)^{-n/2}|S^{n-1}|=
\left(\int_0^\infty f(r)\,dr\right)^{-1}=c.
\]
Substituting this into the previous display gives the claim.
\end{proof}
\medskip

\begin{lemma}[Uniform upper bound for $p(r)$]\label{lem:p_bound}
Define the explicit Nazarov main term
\begin{equation}\label{eq:L_def}
L(n,\rho):=\frac{1}{\sqrt{2\pi}}\frac{1}{\rho}\exp\!\Big(\frac{\rho^4}{4n}\Big)e^{-\rho^2/2}.
\end{equation}
Fix constants $a>0$ and $W>0$. Then there exist constants $C_{a,W}>0$ and
$n_0(a,W)$ such that for all $n\ge n_0(a,W)$, all $\rho=a n^{1/4}$, and all
$r=\sqrt{n-1}+w$ with $|w|\le W$, one has
\begin{equation}\label{eq:p_bound}
p(r)\ \le\ L(n,\rho)\,\exp\!\Big(\frac{w\rho^2}{\sqrt{n}}\Big)\,
\exp\!\Big(\frac{C_{a,W}}{\sqrt{n}}\Big).
\end{equation}
\end{lemma}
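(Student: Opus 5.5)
The plan is to estimate the numerator and denominator of \eqref{eq:p_exact} separately, with $R:=\sqrt{r^2+\rho^2}$ and $m:=\frac{n-2}{2}$. Every error term will be tracked as $O(n^{-1/2})$, uniformly in $|w|\le W$, with constants depending only on $a$ and $W$.

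\textbf{Denominator.} Substituting $t=Rs$ gives exactly $R\,B_n$, where
\[
B_n=\int_{-1}^1(1-s^2)^{m}\,ds=\frac{\sqrt{\pi}\,\Gamma(n/2)}{\Gamma((n+1)/2)}=\sqrt{2\pi/n}\,\bigl(1+O(1/n)\bigr)
\]
by Stirling. Since $\rho^2=a^2\sqrt n$ and $r^2=n-1+2w\sqrt{n-1}+w^2$, we have $R=\sqrt n\,(1+O(n^{-1/2}))$. Hence $R B_n=\sqrt{2\pi}\,(1+O(n^{-1/2}))$, i.e.\ $(RB_n)^{-1}\le \frac{1}{\sqrt{2\pi}}e^{C/\sqrt n}$.

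\textbf{Numerator.} I will use $\log(1-u)\le -u-u^2/2$ for $u\in[0,1)$. This gives, for $t\in[\rho,R]$,
\[
\Bigl(1-\frac{t^2}{R^2}\Bigr)^m\le \exp\Bigl(-\frac{m t^2}{R^2}-\frac{m t^4}{2R^4}\Bigr)\le \exp\Bigl(-\frac{m t^2}{R^2}-\frac{m\rho^4}{2R^4}\Bigr),
\]
where the last step uses $t\ge\rho$ in the quartic term only. Next I write $t=\rho+s$ with $s\ge0$ and drop the nonpositive term $-ms^2/R^2$. The integral is then at most
\[
\exp\Bigl(-\frac{m\rho^2}{R^2}-\frac{m\rho^4}{2R^4}\Bigr)\int_0^\infty e^{-\lambda s}\,ds=\frac1\lambda\exp(\cdots),\qquad \lambda:=\frac{2m\rho}{R^2}.
\]
Since $2m/R^2=1+O(n^{-1/2})$, we get $1/\lambda\le \rho^{-1}e^{C/\sqrt n}$.

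\textbf{Exponent.} This is the main step, where the uniform errors must be checked. Write
\[
\frac{2m}{R^2}=\frac{n-2}{n-1+2w\sqrt{n-1}+w^2+a^2\sqrt n}=1-\delta+O(\delta^2)+O(1/n),\qquad \delta:=\frac{2w+a^2}{\sqrt n}.
\]
Multiplying by $\rho^2/2=a^2\sqrt n/2$, the error terms $\rho^2\,O(\delta^2)$ and $\rho^2\,O(1/n)$ are $O(n^{-1/2})$. Also $\sqrt{n-1}=\sqrt n+O(n^{-1/2})$, so replacing $\sqrt{n-1}$ by $\sqrt n$ costs only $\rho^2\cdot O(1/n)=O(n^{-1/2})$. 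This gives
\[
\frac{m\rho^2}{R^2}=\frac{\rho^2}{2}-\frac{w\rho^2}{\sqrt n}-\frac{\rho^4}{2n}+O(n^{-1/2}).
\]
Similarly $\frac{m\rho^4}{2R^4}=\frac{\rho^4}{4n}+\rho^4\,O(n^{-3/2})=\frac{\rho^4}{4n}+O(n^{-1/2})$. Adding the two, the exponent is
\[
-\frac{\rho^2}{2}+\frac{w\rho^2}{\sqrt n}+\frac{\rho^4}{4n}+O(n^{-1/2}).
\]

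\textbf{Conclusion.} Multiplying the three factors, namely $(RB_n)^{-1}$, $1/\lambda$ and the exponential, gives exactly
\[
L(n,\rho)\,e^{w\rho^2/\sqrt n}\,e^{C_{a,W}/\sqrt n}.
\]
Here $n_0(a,W)$ is chosen large enough that all the $O(\cdot)$ expansions are valid and that $R>\rho$ (which is automatic). The only delicate point is the bookkeeping that every error is $O(n^{-1/2})$ uniformly in $|w|\le W$. This relies on $\rho^2\sim\sqrt n$ absorbing only $O(1/n)$ relative errors, and on the quartic correction being needed only to leading order.
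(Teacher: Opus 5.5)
Your proposal is correct and follows essentially the same route as the paper. You split $p(r)$ into numerator and denominator, bound the numerator with $\log(1-u)\le -u-u^2/2$ (using $t\ge\rho$ only in the quartic term), and do the same $O(n^{-1/2})$ bookkeeping of the exponent. The differences are cosmetic: you handle the denominator by Stirling rather than Wendel's inequality, and you get the tail factor $R^2/((n-2)\rho)$ from $t^2\ge\rho^2+2\rho(t-\rho)$ instead of the Mills-ratio bound $m(x)\le 1/x$, which gives exactly the same estimate.
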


\begin{proof}
Put $S:=r^2+\rho^2$. Then the denominator in \eqref{eq:p_exact} equals
\[
\sqrt S\int_{-1}^{1}(1-u^2)^{\frac{n-2}{2}}du
=
\sqrt S\,\sqrt\pi\,\frac{\Gamma(n/2)}{\Gamma((n+1)/2)}.
\]
By Wendel's inequality~\cite{Wendel1948},
\[
\frac{\Gamma(n/2)}{\Gamma((n+1)/2)}
\ge \sqrt{\frac{2}{n}},
\]
and hence
\[
\sqrt S\,\sqrt\pi\,\frac{\Gamma(n/2)}{\Gamma((n+1)/2)}
\ge \sqrt{\frac{2\pi S}{n}} .
\]
For the numerator, use
\[
\log(1-u)\le -u-\frac{u^2}{2},\qquad 0<u<1.
\]
Thus, for \(0<t<\sqrt S\),
\[
\Bigl(1-\frac{t^2}{S}\Bigr)^{\frac{n-2}{2}}
\le
\exp\!\left(
-\frac{n-2}{2S}t^2-\frac{n-2}{4S^2}t^4
\right).
\]
Since \(t\ge \rho\) on the interval of integration,
\[
\int_{\rho}^{\sqrt S}
\Bigl(1-\frac{t^2}{S}\Bigr)^{\frac{n-2}{2}}dt
\le
\exp\!\left(-\frac{n-2}{4S^2}\rho^4\right)
\int_{\rho}^{\infty}
\exp\!\left(-\frac{n-2}{2S}t^2\right)dt .
\]
Set 
\[
m(x):=\frac{1-\Phi(x)}{\varphi_1(x)}
=
\frac{\int_x^\infty e^{-u^2/2}\,du}{e^{-x^2/2}},
\qquad x>0.
\]
It follows from integration by parts that  \(m(x)\le x^{-1}\), i.e.,  equivalently,
\[
\int_x^\infty e^{-u^2/2}\,du\le \frac{1}{x}e^{-x^2/2}.
\qquad x>0,
\]
 Applying this with
\[
x=\rho\sqrt{\frac{n-2}{S}},
\]
we get, for \(n\ge3\),
\[
\int_{\rho}^{\infty}
\exp\!\left(-\frac{n-2}{2S}t^2\right)dt
=
\sqrt{\frac{S}{n-2}}
\int_{\rho\sqrt{(n-2)/S}}^\infty e^{-u^2/2}\,du
\le
\frac{S}{(n-2)\rho}
\exp\!\left(-\frac{n-2}{2S}\rho^2\right).
\]
Therefore
\[
\int_{\rho}^{\sqrt S}
\Bigl(1-\frac{t^2}{S}\Bigr)^{\frac{n-2}{2}}dt
\le
\frac{S}{(n-2)\rho}
\exp\!\Big(
-\frac{n-2}{2S}\rho^2-\frac{n-2}{4S^2}\rho^4
\Big).
\] Hence
\[
p(r)\le
\frac{1}{\sqrt{2\pi}\rho}\,
\frac{S}{n-2}\sqrt{\frac{n}{S}}\,
\exp\!\Big(
-\frac{n-2}{2S}\rho^2-\frac{n-2}{4S^2}\rho^4
\Big).
\]
Now set $\rho=a n^{1/4}$ and $r=\sqrt{n-1}+w$, with $|w|\le W$. Then
\[
S=r^2+\rho^2
=(\sqrt{n-1}+w)^2+\rho^2
=n+(2w+a^2)\sqrt n+O_{a,W}(1),
\]
uniformly for $|w|\le W$. Hence
\[
\frac{n-2}{S}
=
1-\frac{2w+a^2}{\sqrt n}+O_{a,W}(n^{-1}),
\qquad
\frac{n-2}{S^2}
=
\frac1n+O_{a,W}(n^{-3/2}).
\]
Since $\rho^2=a^2\sqrt n$ and $\rho^4=a^4 n$, we get
\[
-\frac{n-2}{2S}\rho^2
=
-\frac{\rho^2}{2}
+\frac{w\rho^2}{\sqrt n}
+\frac{\rho^4}{2n}
+O_{a,W}(n^{-1/2}),
\]
and
\[
-\frac{n-2}{4S^2}\rho^4
=
-\frac{\rho^4}{4n}
+O_{a,W}(n^{-1/2}).
\]
Adding the last two displays gives
\[
-\frac{n-2}{2S}\rho^2-\frac{n-2}{4S^2}\rho^4
=
-\frac{\rho^2}{2}
+\frac{w\rho^2}{\sqrt n}
+\frac{\rho^4}{4n}
+O_{a,W}(n^{-1/2}),
\]
uniformly for $|w|\le W$. Substituting the exponent expansion together with
\[
\frac{S}{n-2}\sqrt{\frac{n}{S}}
=
\exp\bigl(O_{a,W}(n^{-1/2})\bigr)
\]
gives
\[
p(r)\le
\frac{1}{\sqrt{2\pi}}\frac1\rho
\exp\!\Big(-\frac{\rho^2}{2}+\frac{\rho^4}{4n}\Big)
\exp\!\Big(\frac{w\rho^2}{\sqrt n}\Big)
\exp\!\Big(\frac{C_{a,W}}{\sqrt n}\Big),
\]
which is exactly \eqref{eq:p_bound} by the definition of $L(n,\rho)$. Increasing
$n_0(a,W)$ if necessary so that $n\ge3$ completes the proof.
\end{proof}

\medskip

\begin{lemma}[Uniform Laplace form for $c\,f(\sqrt{n-1}+w)$]\label{lem:cf}
Fix $W>0$. There exist constants $C_W>0$ and $n_1(W)$ such that for all
$n\ge n_1(W)$ and all $|w|\le W$,
\begin{equation}\label{eq:cf_asymp}
c\,f(\sqrt{n-1}+w)
=
\frac{1}{\sqrt{\pi}}e^{-w^2}\exp\!\Big(\frac{\theta_{n,W}(w)}{\sqrt{n}}\Big),
\qquad |\theta_{n,W}(w)|\le C_W.
\end{equation}
\end{lemma}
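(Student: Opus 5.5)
The plan is to split $c\,f(\sqrt{n-1}+w)$ into the normalizing constant $c$ and the value $f(\sqrt{n-1}+w)$, expand each with explicit error terms, and check that the leading factors cancel exactly, so that only $\frac{1}{\sqrt\pi}e^{-w^2}$ and an $O_W(n^{-1/2})$ error in the exponent survive.

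\emph{The constant.} First compute $c$ exactly. Substituting $t=\sqrt{2s}$ gives
\[
\int_0^\infty t^{n-1}e^{-t^2/2}\,dt=2^{n/2-1}\Gamma(n/2),
\qquad\text{so}\qquad c=\frac{1}{2^{n/2-1}\Gamma(n/2)}.
\]
Apply Stirling's formula with explicit remainder, $\log\Gamma(z)=(z-\tfrac12)\log z-z+\tfrac12\log(2\pi)+O(1/z)$, at $z=n/2$. This yields
\[
\log c=-\frac{n-1}{2}\log n+\frac{n}{2}-\frac12\log\pi+O(1/n).
\]
Here $\log 2$ contributes $-(\tfrac n2-1)\log 2+(\tfrac{n-1}{2})\log 2-\tfrac12\log 2\,$, which cancels except for the $\pi$ term. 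This step is routine.

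\emph{The value of $f$.} With $r=\sqrt{n-1}+w$,
\[
\log f(r)=(n-1)\log r-\frac{r^2}{2},\qquad
\log r=\tfrac12\log(n-1)+\log\Bigl(1+\tfrac{w}{\sqrt{n-1}}\Bigr).
\]
Taylor expand the last logarithm to third order, $\log(1+x)=x-\tfrac{x^2}{2}+O(x^3)$, uniformly for $|x|\le W/\sqrt{n-1}\le\tfrac12$ once $n$ is large. Then
\[
(n-1)\log\Bigl(1+\tfrac{w}{\sqrt{n-1}}\Bigr)=w\sqrt{n-1}-\frac{w^2}{2}+O_W(n^{-1/2}),
\qquad
\frac{r^2}{2}=\frac{n-1}{2}+w\sqrt{n-1}+\frac{w^2}{2}.
\]
The linear terms $w\sqrt{n-1}$ cancel, which is the key point. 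What remains is
\[
\log f(r)=\frac{n-1}{2}\log(n-1)-\frac{n-1}{2}-w^2+O_W(n^{-1/2}).
\]

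\emph{Combining.} Add $\log c$ to this. Write $\frac{n-1}{2}\log(n-1)=\frac{n-1}{2}\log n+\frac{n-1}{2}\log(1-\tfrac1n)$, and note that $\frac{n-1}{2}\log(1-\tfrac1n)=-\tfrac12+O(1/n)$. The terms $\pm\frac{n-1}{2}\log n$ cancel, and the constants combine as $\frac n2-\frac{n-1}{2}-\frac12=0$. This leaves
\[
\log\bigl(c f(r)\bigr)=-\tfrac12\log\pi-w^2+O_W(n^{-1/2}),
\]
which is \eqref{eq:cf_asymp} with $\theta_{n,W}(w)$ defined as $\sqrt n$ times the remainder. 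Its bound $C_W$ comes from the uniform cubic Taylor remainder, of order $W^3/\sqrt{n-1}$, together with the $O(1/n)$ Stirling and $\log(1-1/n)$ errors.

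The only delicate step is the bookkeeping: confirming that the $\sqrt n$-order terms and the $O(1)$ constants cancel exactly, so that nothing of order $1$ or larger remains in $\theta$. To guard against slips, I would check the final constant against the heuristic that $|y|$, for $y\sim N(0,I_n)$, is approximately $N(\sqrt{n-1/2},\,1/2)$, which has density $\frac1{\sqrt\pi}e^{-w^2}$. Choosing $n_1(W)$ so that $W/\sqrt{n-1}\le\tfrac12$ makes all the error terms uniform in $|w|\le W$.
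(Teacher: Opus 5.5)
Your proof is correct and follows essentially the same route as the paper. Both compute $c$ exactly via $\int_0^\infty f=2^{n/2-1}\Gamma(n/2)$ and apply Stirling, and both use a third-order Taylor expansion of $(n-1)\log(1+w/\sqrt{n-1})$ in which the $w\sqrt{n-1}$ terms cancel. The only cosmetic difference is that the paper factors through $c\,f(\sqrt{n-1})$ and the ratio $f(\sqrt{n-1}+w)/f(\sqrt{n-1})$, whereas you expand $\log c$ and $\log f(r)$ separately and cancel the large terms by hand (your bookkeeping checks out).
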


\begin{proof}
Let $t_0:=\sqrt{n-1}$ and $u:=w/t_0$. 
Then
\[
\log\frac{f(t_0+w)}{f(t_0)}=(n-1)\log(1+u)-t_0w-\frac{w^2}{2}.
\]
For $n\ge 4W^2+1$ we have $|u|\le 1/2$. Using $|\log(1+u)-(u-u^2/2)|\le 2|u|^3$ for $|u|\le 1/2$ and the identities
$(n-1)u=t_0w$, $(n-1)u^2=w^2$, we obtain
\[
\log\frac{f(t_0+w)}{f(t_0)}=-w^2+O_W(n^{-1/2})
\quad\text{uniformly for }|w|\le W.
\]
Next, $\int_0^\infty f(t)\,dt=2^{n/2-1}\Gamma(n/2)$, so $c f(t_0)$ can be
estimated by Stirling bounds for $\Gamma(n/2)$, giving
$c f(t_0)=\frac{1}{\sqrt{\pi}}\exp(O(n^{-1}))$.
Combining the two estimates yields \eqref{eq:cf_asymp}.
\end{proof}

\medskip

\begin{proof}[Proof of Theorem~\ref{Gaussian}]
Fix the rational parameters
\[
a:=\frac{6131}{5000}=1.2262,
\qquad
b:=\frac{2387}{1000}=2.387,
\qquad
W:=6.
\]
For each $n$, set
\[
\rho:=a n^{1/4},
\qquad
N:=\Big\lfloor \frac{b}{L(n,\rho)}\Big\rfloor,
\]
where $L(n,\rho)$ is defined in \eqref{eq:L_def}. Since $L(n,\rho)\to 0$
exponentially fast in $\sqrt{n}$, we have
\begin{equation}\label{eq:NL_to_b}
N\,L(n,\rho)\to b
\qquad\text{as }n\to\infty.
\end{equation}

Let $Q\subset\mathbb{R}^{n+1}$ be the random polytope from Lemma~\ref{lem:ExpGSA}.
By definition of $\Gamma(n+1)$,
\[
\Gamma(n+1)\ \ge\ \mathbb{E}\,\operatorname{GSA}(Q).
\]
Using Lemma~\ref{lem:ExpGSA} and radialization \eqref{eq:radialize},
\begin{equation}\label{eq:ExpGSA_radial}
\mathbb{E}\,\operatorname{GSA}(Q)
=
N\,\frac{1}{\sqrt{2\pi}}e^{-\rho^2/2}\,
c\int_0^\infty f(r)\bigl(1-p(r)\bigr)^{N-1}\,dr.
\end{equation}
Restrict to $r=\sqrt{n-1}+w$ with $|w|\le W$:
\begin{equation}\label{eq:restrictW}
\mathbb{E}\,\operatorname{GSA}(Q)
\ge
N\,\frac{1}{\sqrt{2\pi}}e^{-\rho^2/2}
\int_{-W}^{W} c\,f(\sqrt{n-1}+w)\,
\bigl(1-p(\sqrt{n-1}+w)\bigr)^{N-1}\,dw.
\end{equation}

By Lemma~\ref{lem:p_bound}, for $|w|\le W$ and large $n$,
\[
p(\sqrt{n-1}+w)\le q_n(w):=
L(n,\rho)\exp\!\Big(\frac{w\rho^2}{\sqrt{n}}\Big)\exp\!\Big(\frac{C_{a,W}}{\sqrt{n}}\Big).
\]
Since $q_n(w)\to 0$ uniformly on $[-W,W]$, for large $n$ we have $q_n(w)\le 1/2$ there,
and $\log(1-x)\ge -x-x^2$ on $[0,1/2]$ gives
\[
(1-q_n(w))^{N-1}\ge \exp\big(-(N-1)q_n(w)-(N-1)q_n(w)^2\big).
\]
Using $\rho^2/\sqrt{n}=a^2$ and \eqref{eq:NL_to_b}, we get pointwise for each fixed $w$,
\[
(N-1)q_n(w)\to b e^{a^2 w},
\qquad
(N-1)q_n(w)^2\to 0,
\]
hence
\begin{equation}\label{eq:lim_factor2}
\liminf_{n\to\infty}\bigl(1-p(\sqrt{n-1}+w)\bigr)^{N-1}
\ \ge\ \exp\big(-b e^{a^2 w}\big).
\end{equation}
Also Lemma~\ref{lem:cf} gives
\begin{equation}\label{eq:lim_factor1}
c\,f(\sqrt{n-1}+w)\to \frac{1}{\sqrt{\pi}}e^{-w^2} \qquad\text{as }n\to\infty.
\end{equation}

Next, by the definition \eqref{eq:L_def} we have the exact identity
\[
\frac{1}{\sqrt{2\pi}}e^{-\rho^2/2}=\rho e^{-\rho^4/(4n)}\,L(n,\rho).
\]
Therefore, using \eqref{eq:NL_to_b} and $\rho=a n^{1/4}$,
\begin{equation}\label{eq:prefactor_limit}
\frac{N}{n^{1/4}}\frac{1}{\sqrt{2\pi}}e^{-\rho^2/2}
=
\frac{N L(n,\rho)}{n^{1/4}}\rho e^{-\rho^4/(4n)}
\to
b\,a\,e^{-a^4/4} \qquad\text{as }n\to\infty.
\end{equation}

Apply Fatou's lemma to \eqref{eq:restrictW} and use
\eqref{eq:lim_factor2}, \eqref{eq:lim_factor1}, \eqref{eq:prefactor_limit}. We obtain
\begin{equation}\label{eq:liminf_W}
\liminf_{n\to\infty}\frac{\Gamma(n+1)}{n^{1/4}}
\ge
b\,a\,e^{-a^4/4}\cdot \frac{1}{\sqrt{\pi}}
\int_{-W}^{W}\exp\big(-w^2-b e^{a^2 w}\big)\,dw.
\end{equation}

\smallskip\noindent
\emph{Certified numerical lower bound (with $W=6$).}
Let
\[
F(w):=\exp\big(-w^2-b e^{a^2 w}\big),
\qquad
J:=\int_{-6}^{6}F(w)\,dw.
\]
We lower bound $J$ by the composite trapezoidal rule with step $h:=5\cdot 10^{-4}$.
Let $N_h:=24000$ and $w_k:=-6+kh$ ($0\le k\le N_h$). Define
\[
T:=h\Big(\tfrac12 F(w_0)+\sum_{k=1}^{N_h-1}F(w_k)+\tfrac12 F(w_{N_h})\Big).
\]
By the standard error estimate for the composite trapezoidal rule
\cite[Sec.~4.4, Thm.~4.5]{BurdenFairesBurden2016}, if
\(f\in C^2([A,B])\), \(h=(B-A)/m\), and
\[
T_h
=
h\left(\frac{f(A)+f(B)}{2}
+\sum_{k=1}^{m-1} f(A+kh)\right),
\]
then there exists \(\xi\in(A,B)\) such that
\[
\int_A^B f(x)\,dx-T_h
=
-\frac{B-A}{12}h^2 f''(\xi).
\]
In particular,
\[
\left|\int_A^B f(x)\,dx-T_h\right|
\le
\frac{B-A}{12}h^2\|f''\|_{L^\infty[A,B]}.
\]
Applying this with \(A=-6\), \(B=6\), \(f=F\), and \(m=24000\), gives
\[
|J-T|\le \frac{12}{12}h^2\max_{[-6,6]}|F''|
=h^2\max_{[-6,6]}|F''|.
\]
Writing $F=e^{g}$ with $g(w)=-w^2-b e^{a^2 w}$, one has
$F''=(g''+(g')^2)F$. A termwise global estimate using
$\sup_{t>0}t e^{-bt}=1/(be)$,
$\sup_{t>0}t^2 e^{-bt}=(2/b)^2e^{-2}$,
$\sup_{w\in\mathbb{R}}w^2e^{-w^2}=1/e$,
$\sup_{w\in\mathbb{R}}|w|e^{-w^2}=1/\sqrt{2e}$
yields, for all $w\in\mathbb{R}$,
\[
|F''(w)|\le
M(a):=
2+\frac{a^4}{e}+\frac{4}{e}+\frac{4a^2}{e\sqrt{2e}}+4a^4 e^{-2}.
\]
For $a=6131/5000$ one checks $M(a)<6.476$.

A direct finite computation gives
\[
T \geq 0.33310717054594
\]
and therefore
\[
J\ge T - M(a)h^2 \ \ge\ 0.33310555154594.
\]
Moreover,
\[
b\,a\,e^{-a^4/4} \geq 1.6632596039 \qquad
\frac{1}{\sqrt{\pi}}\ \ge\ 0.5641895835.
\]
Hence the right-hand side of \eqref{eq:liminf_W} with $W=6$ is at least
\[
1.6632596039 \times 0.5641895835 \times 0.33310555154594 >\ 0.312584.
\]
Finally, since $(n/(n+1))^{1/4}\to 1$, this implies
\[
\liminf_{n\to\infty}\frac{\Gamma(n)}{n^{1/4}}\ \ge\ 0.312584,
\]
which is exactly Theorem~\ref{Gaussian}.
\end{proof}

\subsection{Proof of Theorem~\ref{lem:sqrt3}}

\begin{proof}[Proof of Theorem~\ref{lem:sqrt3}]
Let \(\gamma\) be the standard Gaussian measure on \(\mathbb{R}\), and let \(\mathrm{He}_m\) denote the monic probabilists' Hermite polynomial of degree \(m\), normalized so that
\[
\int_{\mathbb{R}} \mathrm{He}_m(x)\mathrm{He}_k(x)\,d\gamma(x)=m!\,\delta_{mk}.
\]
In particular,
\[
\|\mathrm{He}_m\|_{L^2(\gamma)}^2=m!.
\]

For \(N\ge 1\), define
\[
S_N(x):=\frac{x_1+\cdots+x_N}{\sqrt N},
\qquad x\in\{-1,1\}^N,
\]
and
\[
F_{m,N}(x):=\mathrm{He}_m(S_N(x))^2.
\]
Since \(\mathrm{He}_m\) has degree \(m\), the function \(\mathrm{He}_m(S_N(x))\) is a polynomial of total degree at most \(m\) in the coordinates \(x_1,\dots,x_N\). After multilinear reduction on the cube (using \(x_i^2=1\)), its Fourier--Walsh degree is still at most \(m\). Hence
\[
\deg F_{m,N}\le 2m.
\]

If \(\varepsilon_1,\dots,\varepsilon_N\) are i.i.d.\ Rademacher signs, then
\[
\|F_{m,N}\|_1=\mathbb{E}\,\mathrm{He}_m\!\left(\frac{\varepsilon_1+\cdots+\varepsilon_N}{\sqrt N}\right)^2,
\]
and
\[
\|F_{m,N}\|_2
=
\left(
\mathbb{E}\,\mathrm{He}_m\!\left(\frac{\varepsilon_1+\cdots+\varepsilon_N}{\sqrt N}\right)^4
\right)^{1/2}.
\]
Now
\[
\frac{\varepsilon_1+\cdots+\varepsilon_N}{\sqrt N}\ \Longrightarrow\ G
\qquad(N\to\infty),
\]
where \(G\sim N(0,1)\), and in fact all moments converge. Since \(\mathrm{He}_m^2\) and \(\mathrm{He}_m^4\) are fixed polynomials, it follows that
\[
\lim_{N\to\infty}\frac{\|F_{m,N}\|_2}{\|F_{m,N}\|_1}
=
\frac{\|\mathrm{He}_m\|_{L^4(\gamma)}^2}{\|\mathrm{He}_m\|_{L^2(\gamma)}^2}.
\]

We now use the asymptotics of Larsson-Cohn. By \cite[Theorem~2.1(b)]{LarssonCohn}, for every fixed \(p>2\),
\[
\|\mathrm{He}_m\|_{L^p(\gamma)}
=
c(p)\,m^{-1/4}\sqrt{m!}\,(p-1)^{m/2}(1+o(1))
\qquad(m\to\infty),
\]
where \(c(p)>0\) is an explicit constant. Taking \(p=4\), we obtain
\[
\|\mathrm{He}_m\|_{L^4(\gamma)}
=
c(4)\,m^{-1/4}\sqrt{m!}\,3^{m/2}(1+o(1)),
\]
and therefore
\[
\frac{\|\mathrm{He}_m\|_{L^4(\gamma)}^2}{\|\mathrm{He}_m\|_{L^2(\gamma)}^2}
=
c(4)^2\,m^{-1/2}\,3^m(1+o(1)).
\]
Consequently,
\[
\lim_{m\to\infty}
\left(
\frac{\|\mathrm{He}_m\|_{L^4(\gamma)}^2}{\|\mathrm{He}_m\|_{L^2(\gamma)}^2}
\right)^{1/(2m)}
=
\sqrt3.
\]

Let \(C<\sqrt3\). By the previous limit, one can choose \(m\) so large that
\[
\frac{\|\mathrm{He}_m\|_{L^4(\gamma)}^2}{\|\mathrm{He}_m\|_{L^2(\gamma)}^2}>C^{2m}.
\]
Fix such an \(m\). Then, by the convergence in \(N\), for all sufficiently large \(N\),
\[
\frac{\|F_{m,N}\|_2}{\|F_{m,N}\|_1}>C^{2m}.
\]
But \(\deg F_{m,N}\le 2m\), so the inequality
\[
\|f\|_2\le C^d\|f\|_1
\]
fails for \(d=2m\). Since this happens for every \(C<\sqrt3\), we conclude that \(C_\ast\ge \sqrt3\).
\end{proof}

\subsection{Proof of Theorem~\ref{Szarek}}
Let $n \in \mathbb{N}$ be even. In this section, we show that the balanced Szarek's inequality, i.e.,  $(x_1,...,x_n)$ is uniformly distributed on $\Omega_n:=\{(x_1,...,x_n) \in \{-1,1\}^n:\sum_{i=1}^n x_i =0\}$, holds with the optimal constant.

The derivation of the optimal constant follows the semigroup proof framework for Szarek's inequality (also called the \(L_1\) Khintchine inequality) due to Kwapie\'n, Lata\l a, and Oleszkiewicz \cite{KwapienLatalaOleszkiewicz96,LatalaOleszkiewicz94,LatalaOleszkiewicz94} (see the second proof of Theorem 32 in \cite{NT23} for a modern presentation). In their setting, the random variables are independent, whereas in our setting they are slightly dependent. Their framework, however, still works. To deal with the lack of independence, we invoke a recent result of Filmus \cite{Filmus16} on orthogonal bases for the slice of the Hamming cube.

This may suggest that a certain symmetry of the distribution can be a suitable replacement for the independence condition in some Khintchine-type inequalities. 

\subsubsection{Harmonic Multilinear Polynomial}
Let $f:\mathbb{R}^n \to \mathbb{R}$ be a multilinear polynomial. We call $f$ a harmonic multilinear polynomial if 
\[
\sum_{i=1}^n \frac{\partial f}{\partial x_i}(x)=0 \quad \text{for all } x \in \mathbb{R}^n.
\]
One can check that the harmonic multilinear polynomials on $\mathbb{R}^n$ form a vector space, which we denote by $H_n$. Note that $H_n$ has dimension $\binom{n}{\lfloor\frac{n}{2}\rfloor}$ and every element in $H_n$ is a multilinear polynomial of degree at most $\frac{n}{2}$; see \cite[Lemma 2]{Filmus16} for a proof. 

A probability distribution $\mu$ on $\mathbb{R}^n$ is called exchangeable if it is invariant under permutations of the coordinates. Given an exchangeable distribution $\mu$ with mild regular conditions, we define the inner product on $H_n$ by 
\[
\langle f,g \rangle=\mathbb{E}_{x\sim\mu}[f(x)g(x)], 
\]
and set $\| f \|_2^2 = \langle f,f \rangle$. 
Filmus \cite[Section 3]{Filmus16} constructed a finite collection of functions $\{\chi_B\}_{B \in B_n}$ that forms an orthogonal basis of $H_n$ with respect to any exchangeable probability distribution $\mu$, and showed that every $f\in H_n$ admits the following Young-Fourier expansion  (whenever $\|\chi_B\|_2^2 > 0$ for all $B \in B_n$):
\[
f = \sum_{B \in B_n}\widehat f(B)\chi_B,
\]
where $\widehat f(B)=\frac{\langle f,\chi_B \rangle}{\|\chi_B\|_2^2}$. We will use the following facts from \cite[Section 3]{Filmus16}: every $\chi_B$ in the basis is a homogeneous multilinear polynomial indexed by $B$, where $B$ is a list of increasing numbers in $[n]$; the length of $B$, denoted by $|B|$, equals the degree of $\chi_B$.  

Filmus \cite[Section 6]{Filmus16} further defined a Laplacian operator $L$ by 
\[
Lf = f - \frac{1}{\binom{n}{2}}\sum_{1\leq i<j \leq n}f^{(i,j)},
\]
where $f^{(i,j)}=f(x^{(i,j)})$ and $x^{(i,j)}$ is obtained from $x$ by swapping its $i$th and $j$th coordinates. He also showed that $L$ admits the following spectral decomposition:
\[
Lf = \sum_{B \in B_n} \frac{2|B|(n+1-|B|)}{n(n-1)}\widehat f(B) \chi_B.
\]

\subsubsection{Balanced Szarek's inequality with best constant}
\begin{theorem}\label{uniqueness}
Let $n \in \mathbb{N}$ be even. For any function $f:\Omega_n \to \mathbb{R}$, there exists a unique $\tilde f \in H_n$ such that $\tilde f(x)= f(x)$ for all $x \in \Omega_n$.
\end{theorem}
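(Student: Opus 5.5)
The plan is to reduce the statement to a dimension count plus an injectivity claim. Consider the restriction map $R:H_n\to\mathbb{R}^{\Omega_n}$, $R(g)=g|_{\Omega_n}$. This map is linear. Its domain has dimension $\binom{n}{n/2}$ by \cite[Lemma 2]{Filmus16}, which we are allowed to assume. Its codomain has dimension $|\Omega_n|=\binom{n}{n/2}$. So existence and uniqueness of $\tilde f$ together are exactly the statement that $R$ is a bijection. Because the dimensions agree, it is enough to prove that $R$ is injective: if $g\in H_n$ and $g\equiv 0$ on $\Omega_n$, then $g=0$ as a polynomial.

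For injectivity I would use Filmus's orthogonal basis with $\mu$ the uniform measure on $\Omega_n$, which is exchangeable. If every basis element satisfies $\|\chi_B\|_2^2=\mathbb{E}_{\mu}\chi_B^2>0$, then $\{\chi_B\}_{B\in B_n}$ are nonzero and pairwise orthogonal with respect to $\mu$. Hence their restrictions to $\Omega_n$ are linearly independent as functions on $\Omega_n$. There are $\binom{n}{n/2}=\dim H_n$ of them, so they form a basis of $H_n$. Now write $g=\sum_B \widehat g(B)\chi_B$. If $g|_{\Omega_n}=0$, then pairing with each $\chi_B$ in $L^2(\mu)$ gives $\widehat g(B)\|\chi_B\|_2^2=0$, so every $\widehat g(B)=0$ and $g=0$. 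As a byproduct, $\{\chi_B|_{\Omega_n}\}$ is an orthogonal basis of all functions on $\Omega_n$.

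The main obstacle is showing that no $\chi_B$ vanishes identically on $\Omega_n$. I would use Filmus's explicit description: for a standard Young tableau of shape $(n-k,k)$ with $k\le n/2$, one has $\chi_B=\prod_{i=1}^{k}(x_{a_i}-x_{b_i})$ with disjoint indices, where $B=(b_1<\dots<b_k)$ and $a_i<b_i$. Since $k\le n/2$, we can choose $x\in\Omega_n$ with $x_{a_i}=1$ and $x_{b_i}=-1$ for all $i$, and balance the remaining $n-2k$ coordinates, which is possible since $n-2k$ is even. At such a point, $\chi_B(x)=2^k\neq 0$, so $\|\chi_B\|_2^2>0$.

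If I prefer not to use the explicit form, there is an alternative route. Filmus gives the formula $\|\chi_B\|_2^2=\mathbb{E}_\mu[\prod_{i}(x_{a_i}-x_{b_i})^2]$, and it can be checked directly that this expectation is positive.
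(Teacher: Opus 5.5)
Your overall reduction is the paper's. You use $\dim H_n=\binom{n}{n/2}=|\Omega_n|$ and the orthogonality of Filmus's $\chi_B$ under the exchangeable uniform measure on $\Omega_n$ (his Theorem 9), so everything reduces to showing $\|\chi_B\|_2^2>0$. The gap is in that last step, because you misdescribe Filmus's basis. For a top set $B=(b_1<\dots<b_d)$, his basis element is not a single product but
\[
\chi_B=\sum_{A<B}\prod_{i=1}^d\bigl(x_{a_i}-x_{b_i}\bigr),
\]
summed over all sequences $A=(a_1,\dots,a_d)$ of distinct elements of $[n]\setminus B$ with $a_i<b_i$. For example, for $n=4$ one has $\chi_{(3)}=x_1+x_2-2x_3$. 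The single products you wrote are Specht polynomials of standard tableaux, and those are \emph{not} orthogonal under exchangeable measures: on $\Omega_n$, $\mathbb{E}[(x_1-x_2)(x_1-x_3)]=\frac{n}{n-1}\neq 0$. So you cannot combine the product form with Theorem 9. Moreover, the evaluation $\chi_B(x)=2^k$ is unjustified for the true $\chi_B$, where different summands could a priori cancel.

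The repair is short, and your chosen point still works once you add one observation. If $x_{b_i}=-1$ for all $i$, each summand equals $\prod_i(x_{a_i}+1)\in\{0,2^d\}$, so all summands are nonnegative. Take one admissible $A_0$, say $a_i$ the $i$-th smallest element of $[n]\setminus B$, which is $<b_i$ because $b_i\ge 2i$. Setting $x=1$ on $A_0$ and balancing the remaining $n-2d$ coordinates then gives $\chi_B(x)\ge 2^d>0$.

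The paper argues differently. It uses Filmus's Theorem 10, $\|\chi_B\|_2^2=c_B\|\chi_d\|_2^2$, where $\chi_d=\prod_{i=1}^d(x_{2i-1}-x_{2i})$ and $c_B=\prod_{i}\frac{(b_i-2i+2)(b_i-2i+1)}{2}>0$. It then evaluates $\chi_d$ at the alternating point $x_i=(-1)^i$. Your ``alternative route'' is essentially this argument, except that the formula you quote drops the factor $c_B$. That omission is harmless for positivity, but the formula is misstated.
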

\begin{proof}
Since the dimension of functions on $\Omega_n$ is $\binom{n}{\frac{n}{2}}$, which matches the cardinality of $\{\chi_B\}_{B\in B_n}$, it suffices to show that $\{\chi_B\}_{B\in B_n}$ is a set of linearly independent vectors when restricted to $\Omega_n$. Equip $\mathbb{R}^n$ with the uniform probability measure on $\Omega_n$. Thus it suffices to show that $\{\chi_B\}_{B\in B_n}$ forms an orthogonal basis with respect to the uniform probability measure on $\Omega_n$. Since the uniform probability measure on $\Omega_n$ is exchangeable, by Theorem 9 in \cite{Filmus16}, we know that $\langle \chi_A,\chi_B \rangle =0 $ for any $A,B \in B_n$ with $A \neq B$. It remains to show that $\| \chi_B\|^2_2> 0$ for all $B \in B_n$. Theorem 10 in \cite{Filmus16} shows that, for $B \in B_n$ of length $d$,
\[
\| \chi_B\|_2^2=c_B\|\chi_d\|_2^2
\]
where $\chi_d=\prod_{i=1}^d (x_{2i-1}-x_{2i})$ and $c_B= \prod_{i=1}^d \frac{(b_i - 2(i-1))(b_i - 2(i-1)-1)}{2} $. By the definition of $B \in B_n$ (see the definition of the {\em top} set under the Definition 4 in \cite{Filmus16}) , one can deduce that $b_i \geq 2i$ for all $i \in [d]$. Hence $c_B>0$ and it suffices to show that $\| \chi_d\|_2^2 >0.$ Let $a\in \Omega_n$ such that $a_i=(-1)^i$ for all $i \in [n]$. Since $\chi_d(a) \neq 0$ and $\Pr\{x =a\}>0$ under the uniform probability measure on $\Omega_n$, we conclude that $\|\chi_d\|_2^2 > 0$, completing the proof. 
\end{proof}
For the rest of the section, we equip $\mathbb{R}^n$ with the uniform probability measure on $\Omega_n$, which is exchangeable. Hence, many of the results in \cite{Filmus16} apply. Let $n \in \mathbb{N}$ be even, and let $(a_1,...,a_n) \in \mathbb{R}^n$. By Theorem~\ref{uniqueness}, there exists $f \in H_n$ such that
\[
f(x)=|\sum_{i=1}^n a_i x_i| \quad \text{for all } x \in \Omega_n.
\]
\begin{theorem}[Spectrum is supported on even indices] \label{eveness}
The spectrum of $f$ defined above is supported on the even indices, i.e., 
\[
f(x) = \sum_{\substack{B \in B_n\\ |B| \text{ even}}} \widehat{f}(B)\chi_B(x).
\]    
\end{theorem}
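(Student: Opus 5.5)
The plan is to use the symmetry $x\mapsto -x$, which preserves $\Omega_n$ and leaves $f$ invariant because $|\sum a_i(-x_i)|=|\sum a_i x_i|$.

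Define $g(x):=f(-x)$ as a polynomial on $\mathbb{R}^n$. The first step is to check that $g\in H_n$. It is clearly multilinear. By the chain rule, $\partial_i g(x)=-(\partial_i f)(-x)$, so $\sum_i \partial_i g(x)=-\sum_i(\partial_i f)(-x)=0$. Since $-\Omega_n=\Omega_n$, we have $g(x)=f(-x)=|\sum a_i x_i|=f(x)$ for all $x\in\Omega_n$. The uniqueness part of Theorem~\ref{uniqueness} then forces $g=f$ as elements of $H_n$, that is, $f(-x)=f(x)$ identically.

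Next, apply the Young--Fourier expansion $f=\sum_{B}\widehat f(B)\chi_B$. By \cite[Section 3]{Filmus16}, each $\chi_B$ is homogeneous of degree $|B|$, so $\chi_B(-x)=(-1)^{|B|}\chi_B(x)$. Hence
\[
f(-x)=\sum_{B\in B_n}(-1)^{|B|}\widehat f(B)\chi_B(x).
\]
Comparing this with $f=\sum_B\widehat f(B)\chi_B$ requires uniqueness of coefficients. This holds because $\{\chi_B\}$ is an orthogonal basis with $\|\chi_B\|_2^2>0$ under the uniform measure on $\Omega_n$, as shown in the proof of Theorem~\ref{uniqueness}. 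It follows that $\widehat f(B)=(-1)^{|B|}\widehat f(B)$, so $\widehat f(B)=0$ whenever $|B|$ is odd.

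An equivalent route avoids the identity $g=f$ on $\mathbb{R}^n$ and computes the coefficient directly: $\widehat f(B)=\mathbb{E}[f\chi_B]/\|\chi_B\|_2^2$. Substituting $x\to -x$, which preserves the uniform measure on $\Omega_n$, gives $\mathbb{E}[f\chi_B]=(-1)^{|B|}\mathbb{E}[f\chi_B]$, so the coefficient vanishes for odd $|B|$.

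The main potential obstacle is only making sure the homogeneity of $\chi_B$ and the invariance of $\Omega_n$ under negation are correctly invoked; no step should require real computation.
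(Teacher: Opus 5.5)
Your proposal is correct and follows essentially the same argument as the paper: use $f(-x)=f(x)$ on $\Omega_n$, the homogeneity $\chi_B(-x)=(-1)^{|B|}\chi_B(x)$, and the linear independence of $\{\chi_B\}$ from Theorem~\ref{uniqueness} to force $\widehat f(B)=0$ for odd $|B|$. The paper compares the two expansions directly on $\Omega_n$, so lifting $f(-x)=f(x)$ to an identity in $H_n$ is harmless but unnecessary, and your alternative computation of $\widehat f(B)$ via the negation-invariance of the uniform measure on $\Omega_n$ is an equally valid shortcut.
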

\begin{proof}
Note that $f(x)=f(-x)$ for all $x \in \Omega_n$. Looking at this fact from the Young-Fourier expansion of $f$, we have that
\[
\sum_{B \in B_n} \widehat f(B)\chi_B(x)= \sum_{B \in B_n}\widehat{f}(B)(-1)^{|B|}\chi_B(x), \quad \text{for all } x \in \Omega_n
\]
due to the fact that $\chi_B$ is a homogeneous multilinear polynomial of degree $|B|$. After rearranging the terms, we have that 
\[
\sum_{\substack{B \in B_n\\|B| \text{ is odd}}} \widehat{f}(B)\chi_B(x)=0,\quad \text{for all }x \in \Omega_n.
\]
Since the functions $\{\chi_B\}_{B\in B_n}$ are linearly independent by Theorem~\ref{uniqueness}, we know that $\widehat{f}(B)=0$ for all $B \in B_n$ with $|B|$ odd, completing the proof.     
\end{proof}

A fact used in the Kwapie\'n-Lata\l a-Oleszkiewicz's proof of the Szarek's inequality is a tightened Poincar\'e inequality for $f$ discussed above. We now derive an analogue on the middle slice. 
\begin{theorem}[A tightened Poincar\'e inequality] \label{Poincare}
 Let $n\in \mathbb{N}$ be even. Let $f\in H_n$ such that $f(x)=|\sum_{i=1}^n a_ix_i|$ for all $x \in \Omega_n$.  $\mathrm{Var}(f):=\mathbb{E}_{\mathrm{unif}\{\Omega_n\}}(f - \mathbb{E}_{\mathrm{unif}\{\Omega_n\}} f)^2.$ Then we have the tightened Poincar\'e inequality 
 \[
 \frac{4}{n}\mathrm{Var}(f) \leq \langle f,Lf \rangle.
 \]
\end{theorem}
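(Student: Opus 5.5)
The plan is to diagonalize both sides of the inequality in Filmus's Young--Fourier basis and then compare the eigenvalues of $L$ degree by degree. First I expand $f=\sum_{B\in B_n}\widehat f(B)\chi_B$. This is legitimate under the uniform measure on $\Omega_n$, because the proof of Theorem~\ref{uniqueness} shows that $\{\chi_B\}$ is an orthogonal basis with $\|\chi_B\|_2^2>0$ for every $B$. By Theorem~\ref{eveness}, only $B$ with $|B|$ even occur.

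\textbf{Step 1: the variance.} The unique basis element of degree $0$ is $\chi_\emptyset$, which is a nonzero constant; I normalize it to $1$. Orthogonality of every other $\chi_B$ to $\chi_\emptyset$ then gives $\mathbb{E}\chi_B=0$ for $B\neq\emptyset$. Hence $\mathbb{E}f=\widehat f(\emptyset)$, and Parseval gives
\[
\mathrm{Var}(f)=\sum_{\substack{B\neq\emptyset\\ |B|\ \text{even}}}\widehat f(B)^2\|\chi_B\|_2^2 .
\]
In particular, every term that contributes to the variance has $2\le |B|\le n/2$. The upper bound holds because every element of $H_n$ has degree at most $n/2$, and $|B|$ equals the degree of $\chi_B$.

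\textbf{Step 2: the Dirichlet form.} By the spectral decomposition of $L$ and orthogonality,
\[
\langle f,Lf\rangle=\sum_{B}\lambda_{|B|}\,\widehat f(B)^2\|\chi_B\|_2^2,
\qquad
\lambda_d:=\frac{2d(n+1-d)}{n(n-1)} .
\]
Since $\lambda_0=0$, the constant term contributes nothing.

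\textbf{Step 3: eigenvalue comparison.} It remains to show that $\lambda_d\ge 4/n$ for every even $d$ with $2\le d\le n/2$. A direct computation gives $\lambda_2=\frac{4(n-1)}{n(n-1)}=\frac4n$. The map $d\mapsto d(n+1-d)$ is a concave parabola that increases for $d\le (n+1)/2$, and $n/2\le (n+1)/2$, so $\lambda_d\ge\lambda_2$ throughout $[2,n/2]$. Summing these termwise comparisons gives $\langle f,Lf\rangle\ge \frac4n\mathrm{Var}(f)$.

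I do not expect a serious obstacle. The substantive input is the parity statement of Theorem~\ref{eveness}: it removes the degree-one component, for which $\lambda_1=2/(n-1)<4/n$ would otherwise spoil the bound. The remaining points need only brief checks: that $\chi_\emptyset$ is the constant function and is the only degree-$0$ element, and that all degrees stay below the vertex of the parabola.
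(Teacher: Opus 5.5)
Your proposal is correct and follows the paper's proof: expand $f$ in Filmus's Young--Fourier basis, use Theorem~\ref{eveness} to discard odd degrees, write both $\mathrm{Var}(f)$ and $\langle f,Lf\rangle$ as sums over even $|B|\ge 2$, and use $|B|\le n/2$ to show each eigenvalue $\frac{2|B|(n+1-|B|)}{n(n-1)}$ is at least $\frac{4}{n}$. Your monotonicity argument for $d\mapsto d(n+1-d)$ on $[2,n/2]$ spells out the ``direct calculation'' the paper leaves implicit.
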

\begin{proof}
\[
\langle f,Lf \rangle = \sum_{B \in B_n} \frac{2|B|(n+1-|B|)}{n(n-1)}\widehat f(B)^2 \|\chi_B\|_2^2 = \sum_{\text{even } |B| \geq 2 }\frac{2|B|(n+1-|B|)}{n(n-1)}\widehat f(B)^2 \|\chi_B\|_2^2,
\]
since $\widehat{f}(B)=0$ for $B$  with $|B|$ odd by Theorem~\ref{eveness}. Moreover,
\[
\mathrm{Var}(f) = \langle f - \mathbb{E}_{\mathrm{unif}\{\Omega_n\}} f, f - \mathbb{E}_{\mathrm{unif}\{\Omega_n\}} f \rangle = \langle f - \widehat f(\emptyset) , f - \widehat f(\emptyset) \rangle = \sum_{\text{even } |B| \geq 2 }\widehat f(B)^2 \|\chi_B\|_2^2.
\]
By \cite[Lemma 2]{Filmus16}, $|B| \leq \frac{n}{2}$. Hence we have $\frac{2|B|(n+1-|B|)}{n(n-1)} \geq \frac{4}{n}$ for even $|B| \geq 2$ by direct calculation, completing the proof. 
\end{proof}

Another fact used in the Kwapie\'n-Lata\l a-Oleszkiewicz's proof of the Szarek's inequality is a pointwise bound for $Lf$. We now derive an analogue on the middle slice. 
\begin{lemma}\label{lemma-Lf}
Let $n\in \mathbb{N}$ be even and $n \geq 4$. Let $f$ be the function discussed above. Then
\[
Lf(x) \leq \frac{2}{n-1} f(x), \, \quad \text{for all } x \in \Omega_n.
\]
\end{lemma}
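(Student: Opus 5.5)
Since $f$ agrees with $x\mapsto|\sum_i a_ix_i|$ on $\Omega_n$, and $\Omega_n$ is closed under transpositions of coordinates, the definition of $L$ gives, for every $x\in\Omega_n$,
\[
Lf(x)=\frac{1}{\binom n2}\sum_{1\le i<j\le n}\bigl(f(x)-f(x^{(i,j)})\bigr).
\]
Only pairs with $x_i\neq x_j$ contribute. Write $s=\sum_k a_kx_k$. Swapping coordinates $i,j$ with $x_i=-x_j$ changes $s$ to $s-2x_i a_i-2x_j a_j=s-2x_i(a_i-a_j)$. So the plan is to bound
\[
\sum_{i<j,\ x_i\neq x_j}\bigl(|s|-|s-2x_i(a_i-a_j)|\bigr)\le \binom n2\frac{2}{n-1}|s|=n|s|.
\]
This mirrors the classical KLO proof, where $|s|-|s-2a_ix_i|\le$ something summed over $i$ gives $Lf\le f$.

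Let $P=\{i:x_i=1\}$ and $M=\{j:x_j=-1\}$, each of size $m=n/2$. The relevant pairs are $(i,j)\in P\times M$, with difference $d_{ij}=a_i-a_j$, and the swap sends $s\mapsto s-2d_{ij}$. Note that
\[
\sum_{i\in P,\,j\in M} d_{ij}=m\Bigl(\sum_P a_i-\sum_M a_j\Bigr)=m s.
\]
Use the elementary inequality $|s|-|s-2d|\le 2d\,\mathrm{sgn}(s)$, which is the convexity/subgradient bound for the absolute value, valid for $s\ne0$. Summing over all $m^2$ pairs gives
\[
\sum\bigl(|s|-|s-2d_{ij}|\bigr)\le 2\,\mathrm{sgn}(s)\,m s=2m|s|=n|s|.
\]
This is exactly the required bound. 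When $s=0$, each term is $-|2d_{ij}|\le 0=n|s|$, so the inequality still holds.

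Dividing by $\binom n2$ then yields $Lf(x)\le \frac{n}{\binom n2}|s|=\frac{2}{n-1}f(x)$. The argument appears essentially routine. The main point to check carefully is the bookkeeping that $Lf(x)$ on $\Omega_n$ depends only on the values of $f$ on $\Omega_n$, because swaps preserve the slice, together with the identity for $\sum d_{ij}$. The hypothesis $n\ge4$ is not actually needed for this step.
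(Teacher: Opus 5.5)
Your proof is correct and is essentially the paper's argument. The paper computes $\sum_{i<j}S_n^{(i,j)}(x)=\bigl(\binom n2-n\bigr)S_n(x)$ on $\Omega_n$ and applies the triangle inequality $\sum_{i<j}|S_n^{(i,j)}(x)|\ge\bigl|\sum_{i<j}S_n^{(i,j)}(x)\bigr|$; this is the same lower linearization of $|\cdot|$ and the same linear identity that you obtain on the $m^2$ mixed pairs. Your sign-weighted form $\sum|y_{ij}|\ge\mathrm{sgn}(s)\sum y_{ij}$ avoids needing $\binom n2-n\ge 0$, so, as you note, the hypothesis $n\ge4$ plays no role in this step.
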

\begin{proof}
Denote by $S_n(x)=\sum_{i=1}^n a_ix_i$. Note that for $x \in \Omega_n$,
\begin{align*}
    \sum_{i<j} S_n^{(i,j)}(x) &= \sum_{i<j} \big(S_n(x) + (a_i-a_j)(x_j-x_i)\bigr) \\
    & = \binom{n}{2} S_n(x) + \frac{1}{2}\sum_{i\neq j}(a_ix_j -a_ix_i-a_jx_j+a_jx_i) \\
    & = \binom{n}{2} S_n(x) + \frac{1}{2}\sum_{i\neq j}a_ix_j - \frac{1}{2}\sum_{i\neq j}a_ix_i- \frac{1}{2}\sum_{i\neq j}a_jx_j+ \frac{1}{2}\sum_{i\neq j}a_jx_i\\
    & =  \binom{n}{2} S_n(x) + \frac{1}{2}\sum_{i=1}^n (a_i(\sum_{j=1}^n x_j)-a_ix_i) - \frac{1}{2}(n-1)S_n(x) \\ &- \frac{1}{2}(n-1)S_n(x) + \frac{1}{2}\sum_{j=1}^n (a_j(\sum_{i=1}^n x_i)-a_jx_j) \\
    & = \binom{n}{2} S_n(x) - \frac{1}{2}S_n(x) - \frac{1}{2}(n-1)S_n(x)- \frac{1}{2}(n-1)S_n(x) -  \frac{1}{2}S_n(x), \quad\text{since}\sum_{i=1}^n x_i=0 \\
    & = (\binom{n}{2} -n )S_n(x).
\end{align*}
Hence, for $x \in \Omega_n$
\begin{align*}
    Lf(x) &= |S_n(x)| - \frac{1}{\binom{n}{2}}\sum_{i<j}|S_n^{(i,j)}(x)| \\
    & \leq |S_n(x)| - \frac{1}{\binom{n}{2}}|\sum_{i<j}S_n^{(i,j)}(x)| \\
    & = |S_n(x)| - \frac{1}{\binom{n}{2}}|(\binom{n}{2}-n)S_n(x)| \\
    & = \frac{2}{n-1}f(x).
\end{align*}
\end{proof}

\begin{proof}[Proof of Theorem~\ref{Szarek}]
The upper bound follows from  Jensen's inequality. For the lower bound, we proceed as follows. Let $f(x)$ be the unique multilinear harmonic polynomial over $\mathbb{R}^n$ such that $f(x)=|\sum_{i=1}^n a_ix_i|$ for all $x\in \Omega_n$. Then by Theorem~\ref{Poincare} and Lemma~\ref{lemma-Lf}, we have
\[
\frac{4}{n}\mathrm{Var}(f) \leq \langle f,Lf\rangle \leq  \frac{2}{n-1} \|f\|_2^2.
\]
Since $\mathrm{Var}(f) = \|f\|_2^2 - (\mathbb{E}f)^2$, we obtain
\[
\mathbb{E}f \geq \sqrt{\frac{n-2}{2(n-1)}} \|f\|_2,
\]
completing the proof. 
\end{proof}
\begin{remark}
It would be interesting to explore further which symmetry condition of the distribution of $(x_1,\ldots,x_n)$ leads to Khintchine-type inequalities, without assuming that the $x_i$ are independent.
\end{remark}

\subsection{Proof of Theorem~\ref{sidon}}

The lower bound \(c\ge 1.28\) was proved in \cite{CloningerSteinerberger17}. The key estimate, established in \cite[Lemma~3]{CloningerSteinerberger17}, is that for all \(m,n\in\mathbb N\),
\[
c\ge b_{n,m}-\frac{2}{m}-\frac{1}{m^2},
\]
where \(b_{n,m}\) is defined in terms of a discrete set \(B_{n,m}\); see \cite[Lemma~3]{CloningerSteinerberger17} for the definition of $B_{n,m}$ and further details. Using a large-scale computation, Cloninger and Steinerberger \cite[Section 3.3]{CloningerSteinerberger17} showed that for \(n=24\) and \(m=50\),
\[
b_{n,m}\ge 1.28+\frac{2}{m}+\frac{1}{m^2},
\]
which yields \(c\ge 1.28\). Their computation required about \(20000\) CPU hours on a high-performance server.

Our improvement comes from a small refinement of the error term. Namely, we replace the estimate above by
\[
c\ge b_{n,m}-\frac{2}{m}-\frac{1}{2m^2}.
\]
Using the same computed value of \(b_{24,50}\), this gives
\[
c\ge 1.2802.
\]

The argument is very simple: the only change is in the bound for \(\varepsilon\ast\varepsilon\). In \cite[Lemma~3]{CloningerSteinerberger17}, one uses
\[
\|\varepsilon\ast\varepsilon\|_{L^\infty(\mathbb R)}\le \frac{1}{m^2}.
\]
Since \(\varepsilon\) is supported on \(\left[-\tfrac14,\tfrac14\right]\), which has length \(\tfrac12\), we have
\[
\|\varepsilon\|_{L^1(\mathbb R)}
\le \frac12\,\|\varepsilon\|_{L^\infty(\mathbb R)}
\le \frac{1}{2m}.
\]
Therefore,
\[
\|\varepsilon\ast\varepsilon\|_{L^\infty(\mathbb R)}
\le \|\varepsilon\|_{L^\infty(\mathbb R)}\|\varepsilon\|_{L^1(\mathbb R)}
\le \frac1m\cdot\frac{1}{2m}
= \frac{1}{2m^2}.
\]
Substituting this into the proof of \cite[Lemma~3]{CloningerSteinerberger17} yields
\[
c\ge b_{n,m}-\frac{2}{m}-\frac{1}{2m^2},
\]
as claimed.

\section*{Acknowledgments}
 P.I. acknowledges partial support from the NSF CAREER grant DMS-2152401, a Simons Fellowship, and a Humboldt Research Fellowship for Experienced Researchers. 

\appendix
\section{Chat Links}\label{appendix:A}

\begin{itemize}
    \item Theorem~\ref{Gaussian}. Chat link with Grok 4.20(Beta). \\
\url{https://grok.com/share/c2hhcmQtNA_452b1841-e7b1-4695-a6aa-cf3210365900?rid=f9834ecf-f716-4a48-81c6-15aefa0dcba7} \\

    \item Upper bound in Theorem~\ref{lem:sqrt3}. Chat link with Grok 4 Expert. \\
    \url{https://grok.com/share/c2hhcmQtNA_84e20fd0-d81a-4809-9728-83d37e88cd5f?rid=f6483ca7-f772-4e51-a8e2-f62926b6dfab}\\

    \item Lower bound in Theorem~\ref{lem:sqrt3}. Chat link with Grok 4 Heavy. \\
    \url{https://grok.com/share/c2hhcmQtNA_826731f3-afbc-42a5-aa61-acca3ec1324a?rid=de4a202d-357d-4ece-9954-16c0dc5951d5}\\

    \item Theorem~\ref{Szarek}. Chat link with Grok 4.30(Beta).\\
    \url{https://grok.com/share/c2hhcmQtNA_de1dfee6-b59f-4fa8-a439-aa176d2308c6}\\

    \item Theorem~\ref{sidon}. Chat link with Grok 4 Heavy.\\
    \url{https://grok.com/share/c2hhcmQtNQ_f4d17f80-4582-4679-b931-06277fd4cfd4?rid=a60436ae-eaba-4638-a0fd-47b231f19cd0}

\end{itemize}

\bibliographystyle{abbrv}

\end{document}